\newtheorem{prop}{Proposition}[subsection]
\newtheorem{cor}{Corollary}[subsection]
\newtheorem{lemma}{Lemma}[subsection]
\newtheorem*{theorem*}{Theorem}
\theoremstyle{remark}
\newtheorem{remark}{Remark}[subsection]
\theoremstyle{definition}
\newtheorem{defin}{Definition}[subsection]
\newtheorem{example}{Example}[subsection]
\begin{document}

\title{The Multi-variable Affine Index Polynomial}

\author{Nicolas Petit}

\maketitle
\begin{center}Oxford College of Emory University\\100 Emory Street, Oxford GA 30054\\ petitnicola@gmail.com\end{center}

\begin{abstract}
\noindent We define a multi-variable version of the Affine Index Polynomial for virtual links. This invariant reduces to the original Affine Index Polynomial in the case of virtual knots, and also generalizes the version for compatible virtual links recently developed by L. Kauffman. We prove that this invariant is a Vassiliev invariant of order one, and study what happens as we shift the coloring of one or more components.

\end{abstract}

\section{Introduction}

The Affine Index Polynomial, defined in \cite{affineindexpolynomial}, was later generalized to the case of compatible virtual links in \cite{virtualknotcobordismaffineindex}. 
The aim of this paper is to extend this notion to the case of non-compatible virtual links, using a definition that reduces to the above invariants (up to some simple adjustments) when restricting ourselves to compatible links or knots.
We will call this extension the Multi-variable Affine Index Polynomial.
We will also prove that the multi-variable affine index polynomial, and as a consequence the two older invariants, are all Vassiliev invariants of order one for virtual knots/links, and we will discuss how the invariants associated to different colorings are related to each other.

This paper is structured as follows: section \ref{background} contains a brief review of the necessary background, including Vassiliev invariants and the previous versions of the AIP. Section \ref{multivariableaip} presents the multi-variable affine index polynomial and states the main results related to it. Section \ref{changingstartingpoints} studies what happens to the polynomial as we change the coloring of the link. Finally, Section \ref{proofs} contains the proofs of the main propositions.

\section{Background}
\label{background}
\subsection{Virtual knots and Vassiliev invariants}

We will work in the virtual knot and link setting.
For the unfamiliar reader, a virtual knot is a knot diagram with two types of crossings, classical (in which one strand goes over another) and virtual (which are artifacts of the fact that the diagram is non-planar), modulo the classical and virtual Reidemeister moves. The types of crossings and the Reidemeister moves are pictured in Figs. \ref{threecrossingtypes}, \ref{RM}, \ref{virtualRM}.

Virtual links are knot diagrams with multiple components (typically linked with each other).

\begin{figure}
\centering
\includegraphics[scale=0.1]{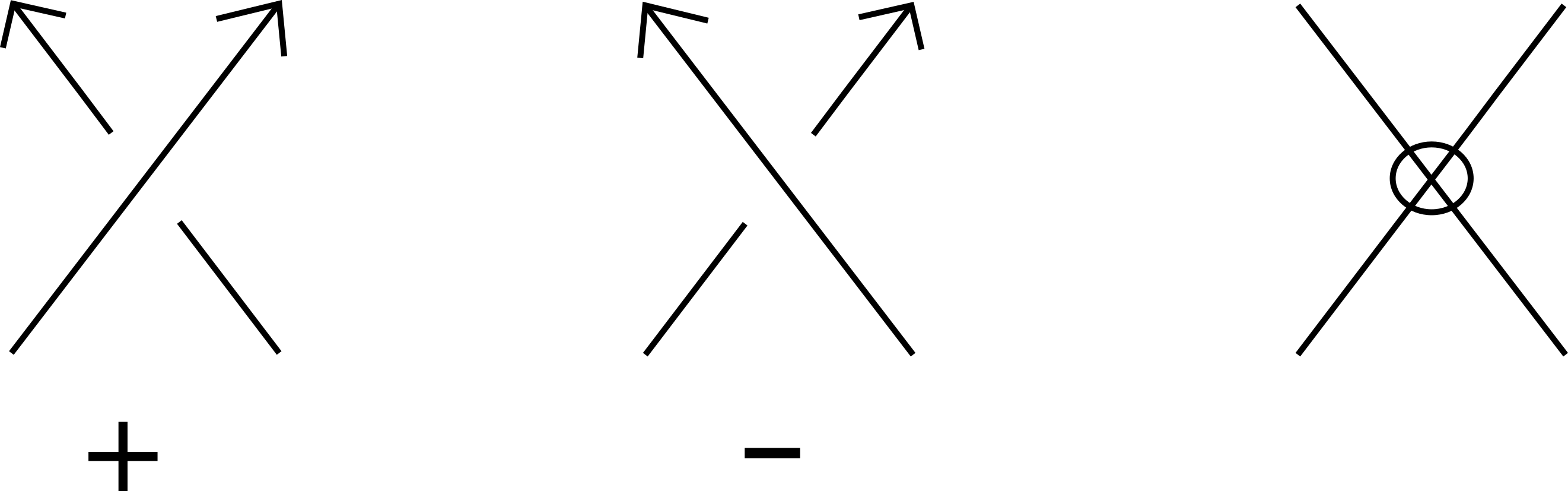}
\caption{The three types of crossing: positive, negative and virtual.}
\label{threecrossingtypes}
\end{figure}

\begin{figure}
\centering
\includegraphics[scale=.06]{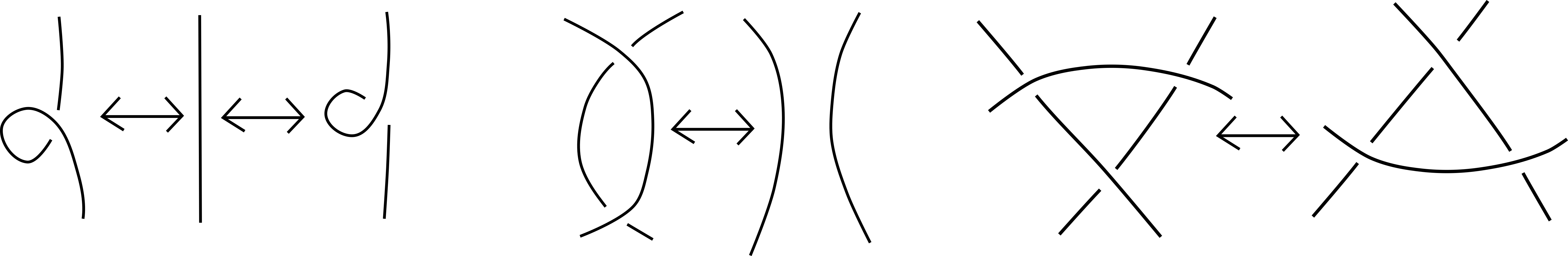}
\caption{The classical Reidemeister moves.}
\label{RM}
\end{figure}

\begin{figure}
\centering
\includegraphics[scale=.055]{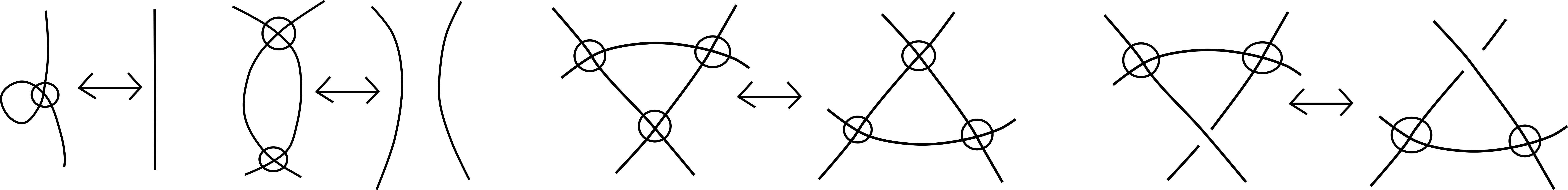}
\caption{The virtual Reidemeister moves.}
\label{virtualRM}
\end{figure}

Vassiliev invariants (or finite-type invariants) for virtual knots were introduced in \cite{virtualknottheory} as a straightforward generalization of the notion of finite-type invariant for classical knots. 
Given a virtual knot invariant $V$, we extend it to the category of virtual knots with double points by taking a weighted average of the two possible resolutions of the double point, as illustrated in Fig. \ref{doublepointresolution}.
We then say that $V$ is a finite-type invariant or Vassiliev invariant of order $\leq n$ if its extension identically vanishes on any virtual knot with more than $n$ double points.

\begin{figure}[!h]
\centering
\includegraphics[scale=0.06]{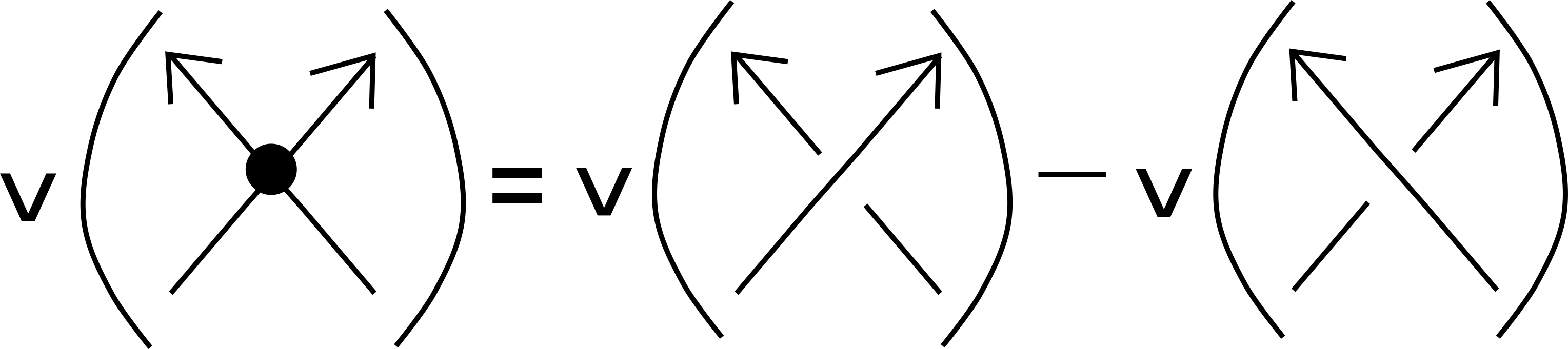}
\caption{How to extend a knot invariant $V$ to knots with double points. }
\label{doublepointresolution}
\end{figure}

\begin{remark}
Our reader should not confuse Vassiliev invariants with the more widely studied and classified collection of Goussarov-Polyak-Viro (GPV) finite-type invariants for virtual knots.
These are the extension of virtual knot invariants to the category of knots with a third type of crossings, called semi-virtual.
These semi-virtual crossings do not have a physical interpretation, and are defined as the difference between a classical crossing and a virtual crossing.
To compute a GPV invariant, one expands each semi-virtual crossing according to their definition and computes the invariant on the resulting combination of virtual knots.

GPV invariants are in some sense closer to the spirit of finite-type invariants for classical knots, as they can be characterized via the Polyak algebra and they can be computed via the use of arrow diagram formulae, similarly to how classical finite-type invariants are connected to the chord diagram algebra modulo the one- and four-term relations.
The interested reader can consult \cite{introtovki} and \cite{GPV} for more information on these invariants.
\end{remark}

Compared to the GPV invariants for virtual knots, little is known about the size or structure of Vassiliev invariants.
The first nontrivial GPV invariant for closed knots happens for $n=3$, while there are many interesting examples of Vassiliev invariants for $n=1,2$.
Examples of Vassiliev invariants include the coefficients of a power series expansion of both the Conway polynomial and the Kauffman Bracket polynomial \cite{virtualknottheory}, as well as various index polynomials (\cite{henrich}, \cite{longframedfti}, \cite{indexpolyvirtualtangles}) and the Three Loop Isotopy invariant of \cite{chrismandyethreeloop}.

\subsection{The Affine Index Polynomial}

This virtual knot invariant was originally introduced in \cite{affineindexpolynomial} and later extended to compatible virtual links in \cite{virtualknotcobordismaffineindex}. As our invariant generalizes both notions, we briefly recall them here.

\begin{defin}
\label{affinelabeling}
Let $K$ be a virtual knot. Label each arc of $K$ with an integer in the following way: pick an arc to have an arbitrary integer label, and propagate the label according to the rules of Fig. \ref{affinelabel}.

\begin{figure}
\centering
\includegraphics[scale=0.1]{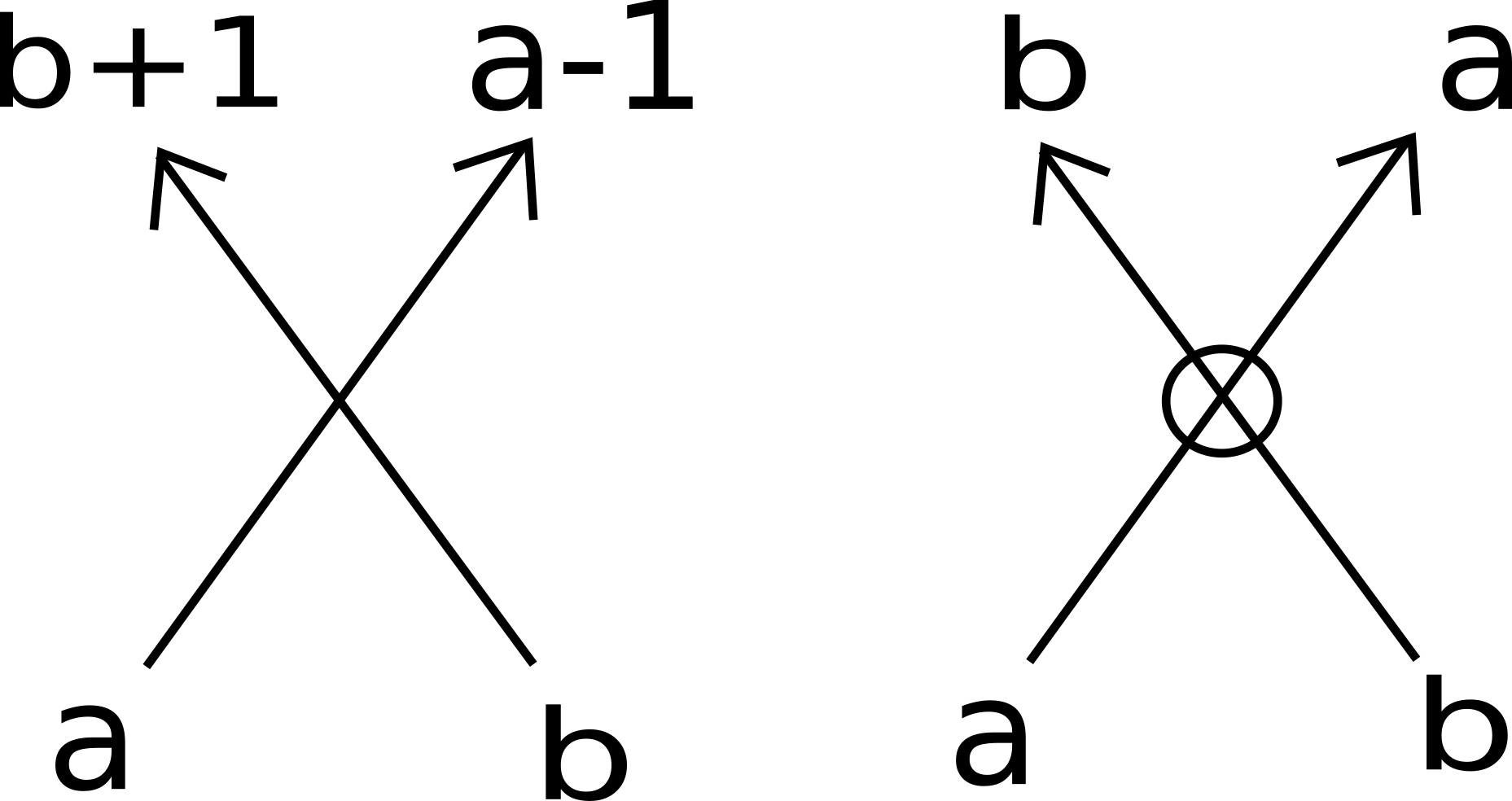}
\caption{How to propagate labels at crossings. The crossing on the left can be either positive or negative.}
\label{affinelabel}
\end{figure}

We call the resulting labeling of the knot $K$ an \emph{affine labeling} (or coloring) for $K$. We call the $\pm1$ shift in the label the \emph{index change} of the strand at the crossing: the bottom-left-to-top-right component has a $-1$ index change, while the bottom-right-to-top-left has a $+1$ index change.
\end{defin}

\begin{defin}\label{AIP}
Let $K$ be a virtual knot with an affine labeling $C$. Suppose that the generic crossing $c$ has labels as pictured in the left side of Fig. \ref{affinelabel}; if $c$ is a positive crossing we assign to it the weight $W_+(c)=a-(b+1)$, while if $c$ is negative we assign to it the weight $W_-(c)=b-(a-1)$.
We define an invariant by gathering the crossing weights into a polynomial $P_K(t)$ with the following two equivalent definitions:
$$P_K(t)=\sum_{c}sgn(c)(t^{W_{\pm}(c)}-1)=\sum_{c}sgn(c)t^{W_{\pm}(c)} - writhe(K),$$
where the sum is over all classical crossings of $K$. 
We call this the \emph{Affine Index Polynomial} of the knot $K$.\end{defin}

\begin{remark}
\label{remark1}
An easy mnemonic for remembering how to assign weights to each crossing: if the crossing is positive we look at the labels on the left side of the crossing, and take the bottom label minus the top label. If the crossing is negative we do the same with the labels on the right side instead.
\end{remark}

\begin{remark}
We will often use the acronym ``AIP'' for the Affine Index Polynomial.
\end{remark}

Note that the polynomial $P_K(t)$ is well-defined and independent of the starting label. If we picked a different starting integer label, all the labels on the knot would be shifted by the same fixed amount. 
Since the weights are defined as the difference between two labels, the shift gets added and subtracted, so the weight is left unchanged.

\begin{prop}
The Affine Index polynomial is a virtual knot invariant.
\end{prop}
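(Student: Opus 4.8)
The plan is to show that $P_K(t)$ is unchanged under each of the moves generating the equivalence relation on virtual knot diagrams: the three classical Reidemeister moves RM1, RM2, RM3, the three purely virtual moves, and the mixed move in which a classical crossing slides past a virtual one. Throughout I would work with the first of the two expressions, $P_K(t)=\sum_{c}sgn(c)\bigl(t^{W_{\pm}(c)}-1\bigr)$, since it makes the relevant cancellations transparent: a classical crossing of weight $0$ contributes nothing, and two crossings of opposite sign and equal weight contribute canceling terms. Before that I would record that the affine labeling of Definition \ref{affinelabeling} is well defined: a virtual knot is a single closed loop, and traversing it once returns to the starting arc after accumulating the sum of all index changes, which vanishes because the two index changes at every classical crossing are $+1$ and $-1$; hence the propagation rules are globally consistent, and (as already noted) the resulting weights do not depend on the chosen initial label.

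The virtual and mixed moves are immediate. A virtual crossing carries both labels straight through, as there is no index change there, so none of the three purely virtual moves alters any arc label or touches any classical crossing, and $P_K(t)$ is literally the same sum. For the mixed move one checks that sliding a classical crossing past a virtual crossing only relabels which arcs are incident to the virtual crossing; the two labels meeting at the classical crossing, and hence its sign and its weight $W_{\pm}$, are unchanged, so the sum is again unaffected.

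Next I would handle RM1 and RM2. Performing RM1 introduces (or removes) a single classical crossing $c$ at a curl; chasing the labels around the small loop — the loop arc is obtained from the incoming arc by one index change $\varepsilon$, then returns to the crossing as the other strand and receives index change $-\varepsilon$ — shows both that the strand leaves the curl with the same label it entered with, so the rest of the diagram is untouched, and that the two labels governing $W_{\pm}(c)$ force $W_{\pm}(c)=0$. Hence $c$ contributes $sgn(c)\bigl(t^{0}-1\bigr)=0$ and $P_K(t)$ is unchanged. Performing RM2 creates two classical crossings $c_1,c_2$ of opposite sign; reading the labels off the bigon one finds $W_{\pm}(c_1)=W_{\pm}(c_2)$, so their contributions $sgn(c_1)(t^{W}-1)+sgn(c_2)(t^{W}-1)$ cancel, and since the net index change across the bigon is zero the labels on the two strands beyond the bigon agree with their previous values.

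The remaining and genuinely computational case is RM3, which I expect to be the main obstacle. Both sides of the move carry three classical crossings with matching signs, and one must verify that the induced affine labelings agree on all boundary arcs and that the multiset of pairs $\bigl(sgn(c),W_{\pm}(c)\bigr)$ over the three crossings is the same before and after. I would first cut down the case analysis: using invariance under RM2 (already established) together with taking mirror images and reversing orientations, it suffices to treat one representative oriented version of RM3, e.g.\ the braid-like move with all three strands oriented upward. For that case I would assign formal integer labels to the three incoming arcs at the bottom, propagate them through all three crossings on each side via the index-change rules, read off the three weights, and check directly that the two lists of weighted monomials coincide. This is a finite, routine but slightly lengthy verification; once it is done, invariance under all the generating moves is established and $P_K(t)$ is a virtual knot invariant.
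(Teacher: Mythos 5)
Your proposal is correct and follows exactly the approach the paper takes (and that the paper in fact delegates to the original reference \cite{affineindexpolynomial}): verify invariance under a generating set of classical, virtual, and mixed Reidemeister moves, with weight $0$ handling RM1, equal weights of opposite-sign crossings handling RM2, and a finite label-chasing computation for one representative RM3. The only thing left implicit is the actual RM3 computation, but your reduction to a single oriented representative and the plan for carrying it out are sound and match the detailed check the paper performs later for the multi-variable version.
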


The proof (found originally in \cite{affineindexpolynomial}) is just a matter of checking a generating set of Reidemeister moves.
The same paper also studies some properties of the Affine Index Polynomial, summarized in the following proposition.

\begin{prop}
Let $K$ be a virtual knot diagram, $\overline{K}$ be the result of changing the orientation of $K$, and $K^*$ be the mirror image of $K$.
We have that $P_{\overline{K}}(t)=P_K(t^{-1})$ and $P_{K^*}(t)=-P_K(t)$.
Finally, if $K$ is a classical knot, then $W_{\pm}(c)=0$ for all crossings, hence $P_K(t)=0$.
\end{prop}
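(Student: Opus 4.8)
The plan is to prove all three assertions by the same kind of local bookkeeping that establishes invariance: each classical crossing $c$ contributes $\operatorname{sgn}(c)\bigl(t^{W_\pm(c)}-1\bigr)$ to $P_K(t)$, and this summand is determined entirely by the sign of $c$ together with the two arc labels appearing in the local picture of Figure~\ref{affinelabel}, so it is enough to see how those data transform under each operation.

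First I would treat the reversal $\overline K$. Reversing the orientation of every strand leaves the sign of each classical crossing unchanged (a crossing sign is insensitive to reversing both of its strands), and a valid affine coloring of $K$ is still a valid affine coloring of $\overline K$; what changes is that incoming and outgoing arcs at each crossing are interchanged, turning a $-1$ index change into a $+1$ index change and vice versa. Reading off the weight of Definition~\ref{AIP} from the (now rotated) local picture therefore replaces $W_\pm(c)$ by $-W_\pm(c)$ while fixing $\operatorname{sgn}(c)$, and summing over all crossings gives
\[
P_{\overline K}(t)=\sum_c \operatorname{sgn}(c)\bigl(t^{-W_\pm(c)}-1\bigr)=P_K(t^{-1}).
\]
For the mirror $K^*$ the situation is dual: every classical crossing changes sign (positive $\leftrightarrow$ negative), while the affine coloring is unaffected, since the propagation rules of Figure~\ref{affinelabel} do not refer to which strand passes over. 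Tracking how the weight formula responds to the change of sign --- and fixing once and for all the convention one uses for the mirror of a virtual diagram --- one finds that each crossing's contribution is replaced by its negative, so $P_{K^*}(t)=-P_K(t)$. This sign-tracking is the one genuinely delicate point in these two arguments, and it is where I would be most careful to stay consistent with the conventions of \cite{affineindexpolynomial}.

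For the last statement the key input is planarity. The complementary regions of a classical diagram $K$ in the plane carry a well-defined winding number (the winding number of the oriented curve $K$ about any point of the region), and two regions sharing an arc have winding numbers differing by exactly $1$, with the sign of the difference fixed by the orientation of that arc. I would then verify that labelling each arc by the winding number of the region immediately to its left is a valid affine coloring; since the affine coloring of a knot is unique up to a global additive constant, this is, up to such a shift, the coloring used to compute $P_K$. At every classical crossing this coloring forces the two labels that enter the weight to differ by exactly $1$ in the right direction --- in the notation of Figure~\ref{affinelabel}, $a=b+1$ --- so that $W_+(c)=a-(b+1)=0$ and $W_-(c)=b-(a-1)=0$, and therefore each crossing contributes $\operatorname{sgn}(c)(t^0-1)=0$ and $P_K(t)=0$.

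The per-crossing computations in the first two parts are routine; the step I expect to require the most thought is the last one, where one needs the winding-number description of the affine coloring (equivalently, the vanishing of the index of every crossing of a planar diagram), and, to a lesser degree, pinning down the correct mirror convention so that the computation of $P_{K^*}$ yields $-P_K(t)$ rather than, say, $-P_K(t^{-1})$.
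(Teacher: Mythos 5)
The paper itself does not prove this proposition (it defers entirely to \cite{affineindexpolynomial}), so your argument has to stand on its own, and two of its three parts have real problems.

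For $\overline K$, your claim that ``a valid affine coloring of $K$ is still a valid affine coloring of $\overline K$'' is false, and it contradicts your own next clause. The propagation rule is anchored to the geometry of the standard picture: the bottom-left-to-top-right strand always carries a $-1$ index change. After reversing both strands and rotating the crossing back to standard position, the strand that was bottom-left-to-top-right is again bottom-left-to-top-right, but the old labels now \emph{increase} by $1$ along it, so they violate the rule; the valid coloring of $\overline K$ is the negation $-\lambda$ of the old one (suitably normalized). This is not a cosmetic point: if you keep the old labels and read ``bottom minus top on the left'' in the rotated picture you get $(a-1)-b=W_+(c)$, i.e.\ the weight is \emph{unchanged}, and your argument would deliver $P_{\overline K}(t)=P_K(t)$. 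Only after replacing every label $x$ by $-x$ do you get $-(a-1)-(-b)=-W_+(c)$ and hence $P_K(t^{-1})$.

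For $K^*$ you have correctly identified the delicate point but then not resolved it, and the computation does not come out as you assert under the natural convention. If $K^*$ is obtained by switching all classical crossings, the coloring is genuinely unchanged (here your reasoning is right), each sign flips, and the weight of a crossing computed with the \emph{same} labels satisfies $W_-(c)=b-(a-1)=-\bigl(a-(b+1)\bigr)=-W_+(c)$; summing gives $P_{K^*}(t)=-P_K(t^{-1})$, not $-P_K(t)$. A planar reflection of the diagram gives the same answer (the coloring negates and the signs flip). To land on $-P_K(t)$ one must compose the mirror with orientation reversal, i.e.\ combine this with the first identity --- or else reconcile the convention with the one intended in the statement. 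Since pinning down exactly this convention is the step you explicitly postpone, the second identity is not established by your argument. By contrast, your treatment of the classical case is correct and complete: the left-hand winding-number (Alexander) labeling does satisfy the propagation rules, it forces $a=b+1$ at every crossing of a planar diagram, and hence $W_\pm(c)=0$; this is essentially the standard argument.
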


The virtual link version of the Affine Index Polynomial was developed in \cite{virtualknotcobordismaffineindex}. We summarize the relevant content for our work in the following definitions and propositions. All proofs can be found in the above-mentioned paper.

\begin{defin}
A virtual link diagram is called \emph{compatible} if every component of the link has algebraic intersection number zero with the other components (in the sense of signed intersection numbers in the plane). A virtual link diagram admits an affine labeling if and only if it is compatible.
\end{defin}

\begin{defin}
Let $L$ be a compatible virtual link diagram. Define an affine labeling $C$ on $L$ by starting each component at an arbitrary integer value, typically represented by a lowercase letter, and propagating the label according to the rules of Fig. \ref{affinelabel}. 
Note that the rule makes no distinction between a self-crossing of a component, or a crossing involving multiple components (so-called external crossing).
Assign weights to every crossing in the usual way. Note that at self-crossings the resulting weight will be an integer not involving the arbitrary labels, while at external crossings the weight will include the difference of the two arbitrary labels.
Finally, construct the polynomial via 
$$P_L(t)=\sum_{c}sgn(c)(t^{W_{\pm}(c)}-1)=\sum_c sgn(c)t^{W_{\pm}(c)} - writhe(L)$$
\end{defin}

\begin{prop}
Given a virtual link $L$ and a coloring $C$ for $L$, the polynomial $P_L(t)$ is an invariant of $(L, C)$.
\end{prop}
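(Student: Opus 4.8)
The plan is to show that $P_L(t)$ is unchanged under every move in a standard generating set of oriented classical and virtual Reidemeister moves, with the coloring $C$ transported along each move in the only reasonable way: a Reidemeister move is supported in a disk, the arcs of $L$ outside that disk are untouched and so retain their labels, and the propagation rules of Fig.~\ref{affinelabel} then determine the labels of the finitely many arcs inside the disk from the labels on the arcs crossing the boundary. Since the components of $L$ before and after a move are in canonical bijection, ``the same coloring'' on both diagrams makes sense (it is the same choice of starting integer on each component), and it suffices to check that, move by move, the sum $\sum_c \mathrm{sgn}(c)\bigl(t^{W_\pm(c)}-1\bigr)$ is preserved. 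The one conceptual remark is that none of the verifications ever uses whether the strands at a crossing belong to the same component or to different ones — the propagation rule and the weight formulas $W_\pm(c)$ only see arc labels — so the argument is formally the computation already carried out for virtual knots in \cite{affineindexpolynomial}, and I will only indicate the steps.

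First, the virtual Reidemeister moves and the mixed detour move involve no classical crossing and leave the label of every arc passing through the virtual crossings unchanged, so each term of the sum, and hence $P_L(t)$, is literally unaffected. For the classical Reidemeister~I move, the curl is a single classical crossing $c$ whose two strands are consecutive arcs of one component; propagating the label of the incoming arc through the crossing shows that the two arcs on the side used to compute $W_\pm(c)$ carry the same label, so $W_\pm(c)=0$ and the term $\mathrm{sgn}(c)\bigl(t^{W_\pm(c)}-1\bigr)$ vanishes; no other crossing is created or destroyed, and the arc leaving the curl carries the same label as the one entering it, so the rest of the diagram stays consistently colored. For Reidemeister~II, the two crossings that appear (or disappear) have opposite sign, and a short propagation of the two boundary labels through them shows that the two weights are equal; the contributions $(+1)\bigl(t^{W}-1\bigr)$ and $(-1)\bigl(t^{W}-1\bigr)$ therefore cancel, and the strand pushed across the other acquires total index change $0$, so again the colors outside the disk are unchanged.

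The substantive case is Reidemeister~III. One fixes the labels of the three arcs entering the triangular disk, propagates them through the three crossings in each of the two diagrams, and checks that the resulting multiset of pairs $\{(\mathrm{sgn}(c),W_\pm(c))\}$ over the three crossings is the same before and after; since $P_L(t)$ depends on the crossings only through this multiset, invariance follows. Working with a minimal generating set of oriented R3 moves (together with the already-established R1, R2 and virtual moves) reduces this to one or two explicit label computations. This bookkeeping — keeping every arc label straight in the R3 picture and confirming the weighted sum is preserved — is the only real obstacle; everything else is immediate from the definitions. Finally, because each move is supported in a disk, the labels of all arcs outside that disk are unchanged, so the coloring transported along any finite sequence of moves is well defined, and combining the per-move computations shows that $P_L(t)$ is an invariant of the pair $(L,C)$.
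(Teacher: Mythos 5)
Your proof is correct and follows the standard approach: the paper itself defers this particular proposition to \cite{virtualknotcobordismaffineindex}, but its own proof of the multi-variable generalization (Prop.~\ref{prop1}) is exactly the same Reidemeister-move check you outline — weight $0$ at an R1 curl, equal weights with opposite signs at R2, preservation of the signed weight multiset at R3, and triviality of the virtual/mixed moves. Your observation that the propagation rule and weight formulas are blind to whether a crossing is a self-crossing or an external one is also the point the paper relies on to reduce the link case to the knot computation.
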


We conclude this section with a simple computational example, repurpused from \cite{virtualknotcobordismaffineindex}; the interested reader can consult that paper to find more examples, as well as the necessary proofs.

\begin{example}
\label{example1}
Let's compute the AIP for the link in Fig. \ref{kauffmanexample}, which has already been colored with two arbitrary labels $a$ and $b$.

\begin{figure}[!h]
\centering
\includegraphics[scale=0.18]{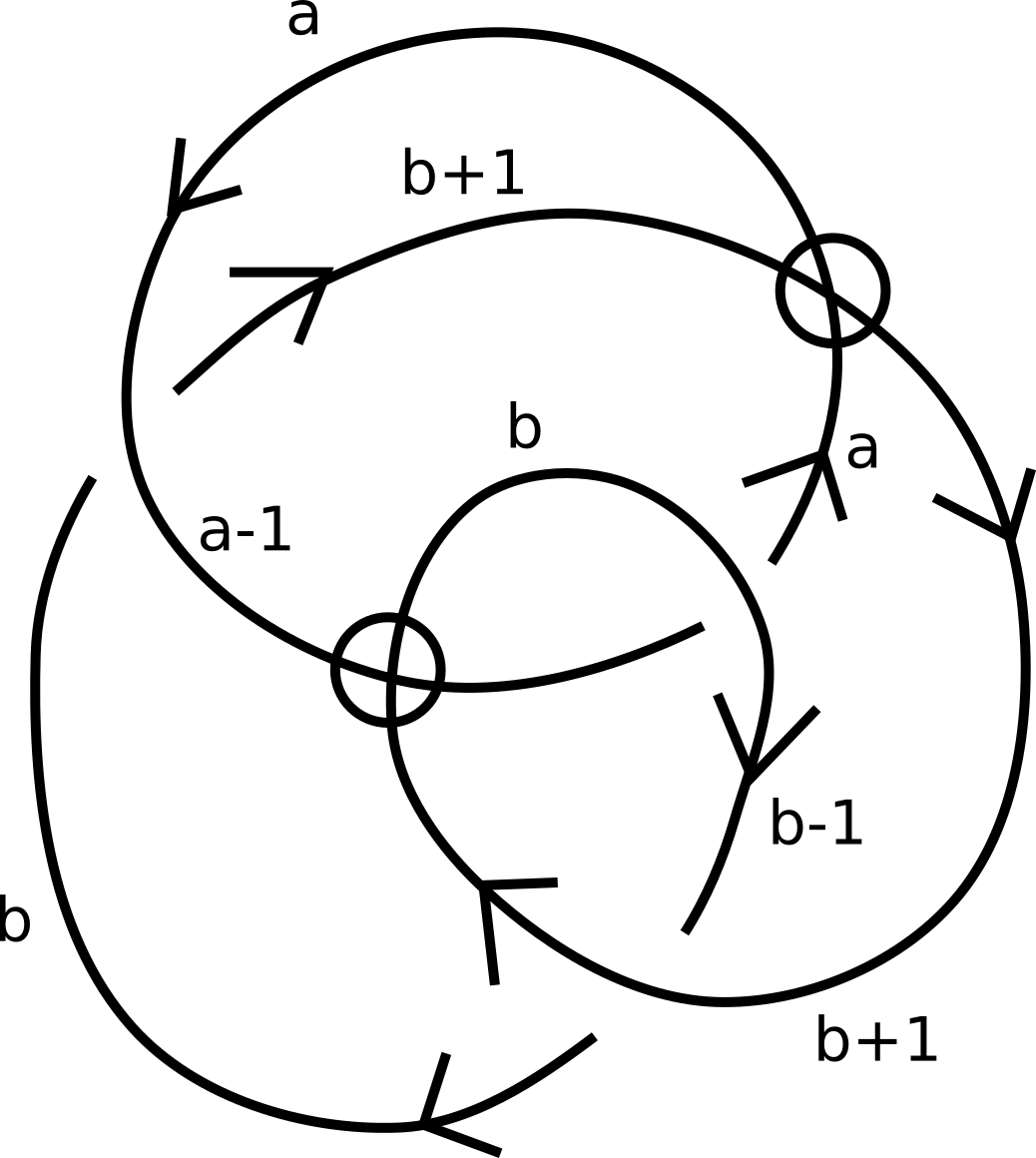}
\caption{A simple two-component link with an affine coloring.}
\label{kauffmanexample}
\end{figure}

To compute the AIP for this link, we look at the labels at the three classical crossings. Since all three crossings are positive, to compute the weight we look at the left labels (for the crossing in standard position) and subtract the top label from the bottom label, see Remark \ref{remark1}. 
At the self-crossing this yields $t^{b+1-b}=t$, at the top left crossing we get $t^{a-(b+1)}=t^{a-b-1}$ and at the top right crossing we get $t^{b-a}$.
Since the total writhe is $3$, we finally get
\begin{equation}\label{polykauffmanexample}p_L(t)=t^{a-b-1}+t^{b-a}+t-3.\end{equation}
\end{example}

\begin{remark}
At any external crossing, the weight will contain the difference of the two arbitrary labels of the components involved. Out of convenience, a new set of arbitrary variables can be introduced to somewhat simplify the resulting expression. In \cite{virtualknotcobordismaffineindex}, the author sets $a-b=N$ and writes the polynomial as $p_L(t)=t^{N-1}+t^{-N}+t-3$ instead.
\end{remark}

\section{The Multi-variable Affine Index Polynomial}

\subsection{Definitions of the invariant}
\label{multivariableaip}
\begin{defin}
Let $L$ be a compatible, oriented virtual link, whose components $L_i$ are ordered. 
For each component, pick a starting point and assign to it a bilabel containing two arbitrary variables, picking different variables for each component. 
We will typically use letters of the alphabet, so the first component will start its label at $(a_1, a_2)$, the second component at $(b_1, b_2)$, etc.

Now propagate the labels according to the rules of Fig. \ref{affinebilabeling}. 
Because the link is compatible, the labeling will wrap around and end at the same value it started.
We call the resulting coloring an \emph{affine bilabeling} for the compatible virtual link $L$.
\begin{figure}[h]
\centering
\includegraphics[scale=0.1]{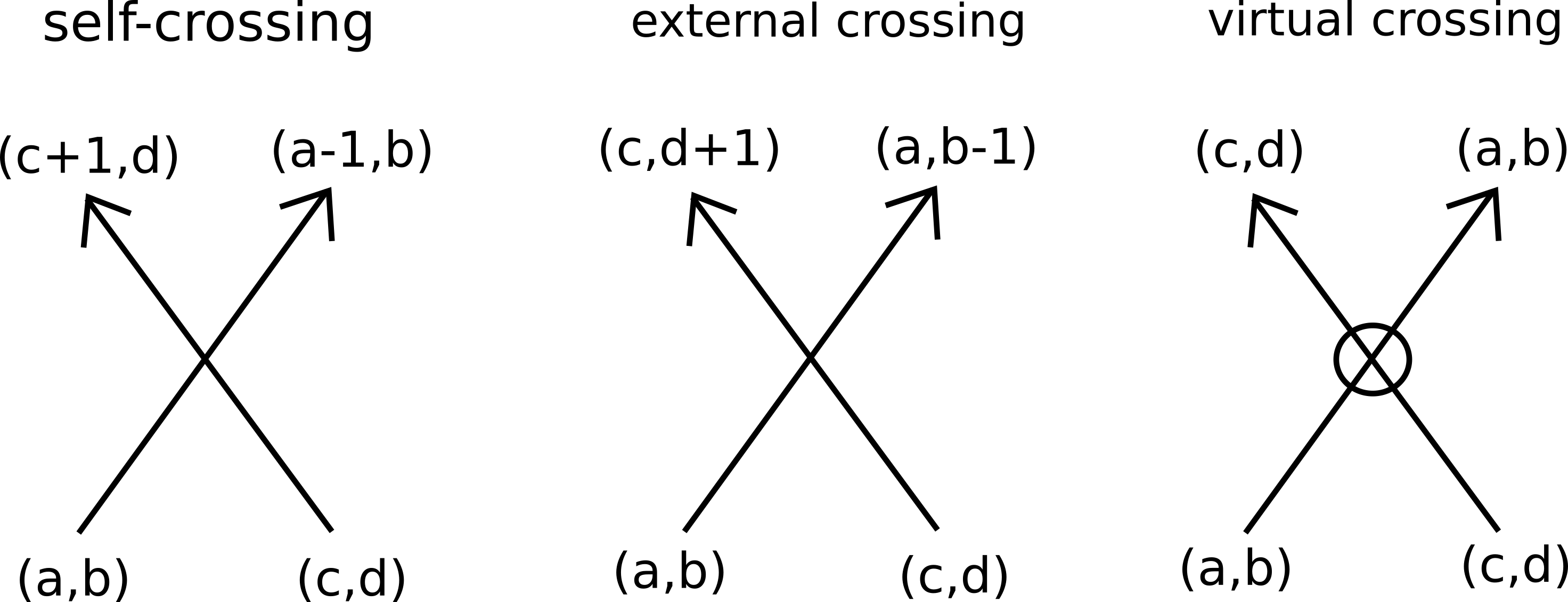}
\caption{How to propagate the bilabel. Here $a,b,c,d$ are arbitrary integers.}
\label{affinebilabeling}
\end{figure}

\end{defin}

\begin{defin}
Let $L$ be a compatible, oriented virtual link, with $n$ ordered components $L_1, \ldots, L_n$, and $C$ an affine bilabeling for it. Assign to component $L_i$ the variable $t_i$. 
Assign to each classical crossing $c$ a weight $W_{\pm}(c)$ in a similar manner to Definition \ref{AIP}: if the labeled crossing looks like the left picture of Fig. \ref{affinebilabeling}, assign to a positive crossing the weight $W_+(c)=a-(c+1)+b-d$, and to a negative crossing the weight $W_-(c)=c-(a-1)+d-b$.
Now construct the polynomial with the formula
$$p_L(t_1, \ldots, t_n)=\sum_{i=1}^n\ \sum_{c\in T_i\text{ over }T_j}sgn(c)(t_i^{W_{\pm}(c)}-1)$$
where the second sum is over all classical crossings where component $T_i$ goes over another component, itself included, and the index $i$ in the variable $t_i$ matches the component of the overstrand.
We call the resulting polynomial the Multi-variable Affine Index Polynomial of $L$.
\end{defin}

\begin{remark}
\label{labelingconvention}
Again, an easy mnemonic to remember the weights is to look at the left/right side of the crossing and subtract the top label from the bottom, then adding the two components of the bilabel together.
Also, we will use uppercase letters as a shorthand for the sum of the associated lowercase variables, e.g. $A=a_1+a_2$, $B=b_1+b_2$, etc.
\end{remark}

\begin{prop}
\label{prop1}
The Multi-variable Affine Index Polynomial is an invariant of the pair $(L, C)$, where $L$ is a compatible virtual link and $C$ is an affine bilabeling as described above. This invariant generalizes the AIP from \cite{affineindexpolynomial} and \cite{virtualknotcobordismaffineindex}.
\end{prop}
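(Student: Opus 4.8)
\noindent\emph{Sketch of the intended proof.} The plan is to prove invariance the same way the single-variable AIP is proved in \cite{affineindexpolynomial} and \cite{virtualknotcobordismaffineindex}: check that $p_L$ is unchanged under a generating set of moves for oriented virtual link diagrams, namely the classical Reidemeister moves R1, R2, R3 together with the detour move (which implies the virtual Reidemeister moves and the classical--virtual mixed move), while carefully tracking how the affine bilabeling $C$ is forced to propagate through each local picture by the rules of Fig.~\ref{affinebilabeling}. Since $p_L$ is assembled from local data --- one contribution $sgn(c)(t_i^{W_{\pm}(c)}-1)$ per classical crossing $c$, with $i$ the index of the overstrand at $c$ --- it suffices in each case to verify that the multiset of these contributions, and the bilabels on the arcs leaving the local picture, agree before and after the move. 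One should note at the outset that this is also where compatibility of $L$ is used: each coordinate of the bilabel propagates by the rules of Fig.~\ref{affinebilabeling}, and compatibility is exactly what guarantees the labeling closes up consistently around every component, so that $C$, and hence $p_L$, is well-defined to begin with.

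The detour move touches no classical crossing and changes no label, so every contribution is literally unchanged. For R1, the single new classical crossing is a self-crossing of some component $L_i$; running the incoming bilabel around the kink shows, just as in the single-variable case, that the loop closes up (so the bilabels outside the kink are untouched) and that the weight of the new crossing is $W_{\pm}=0$, whence its contribution $sgn(c)(t_i^{0}-1)=0$ vanishes --- this is precisely the role of the $-1$ normalization, which absorbs the change in writhe. For R2, the two new classical crossings have opposite signs and a common overstrand component $L_i$; propagating the bilabels through the local picture shows the outgoing bilabels match the incoming ones and that the two crossings carry equal weights, so their contributions $+(t_i^{W}-1)$ and $-(t_i^{W}-1)$ cancel.

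The R3 move is the main obstacle. On each side of the move there are three classical crossings among three strands, with the over/under pattern --- hence the three signs and the identity of the overstrand at each crossing --- the same on both sides. Labelling the three incoming arcs with bilabels and propagating through each side via Fig.~\ref{affinebilabeling}, one must check (i) that the bilabels produced on the three outgoing arcs coincide for the two sides, so the move is compatible with the global coloring, and (ii) that the three pairs $\big(\text{overstrand index},\ W_{\pm}\big)$ match between the two sides. Coordinate-wise this is the same linear bookkeeping as in the classical AIP computation, carried out once for each coordinate of the bilabel and split into cases according to which of the three strands lie on the same component; I expect this casework, together with the care needed to confirm that bilabel propagation is genuinely well-defined under planar isotopy of the diagram, to account for essentially all of the work.

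Finally, for the claim that the invariant generalizes the earlier ones: specialize to a single component. When $L=K$ is a virtual knot, one reads off directly from Fig.~\ref{affinebilabeling} that the bilabeling degenerates --- one coordinate of each bilabel recovers an ordinary affine label of $K$ while the other is completely determined by it (being constant, or differing from the first by a global constant, according to the figure). Substituting this into $W_{\pm}(c)=a-(c+1)+b-d$ collapses it to the ordinary weight of Definition~\ref{AIP} up to a harmless reparametrization, and the polynomial formula $\sum_c sgn(c)(t_1^{W_{\pm}(c)}-1)$ is already in the shape of Definition~\ref{AIP} after renaming $t_1=t$; hence $p_K$ and $P_K$ determine one another. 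Running the same specialization on a compatible link carrying a single affine labeling and then substituting $t_i=t$ for all $i$ recovers the link invariant of \cite{virtualknotcobordismaffineindex}, with Example~\ref{example1} available as a consistency check.
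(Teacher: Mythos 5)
Your proposal is correct and follows essentially the same route as the paper: check a generating set of moves (R1 gives weight zero, R2 gives two equal-weight crossings of opposite sign with a common overstrand, R3 requires casework on which strands share a component while matching signs, overstrands, and weights, and the virtual/mixed moves are trivial), then recover the earlier invariants by collapsing the bilabel and setting all $t_i$ equal. The level of detail deferred on R3 matches the paper, which likewise presents only one sample case and leaves the rest as an exercise.
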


We will go over the proof (and all other proofs) in the section \ref{proofs}.
\begin{example}
As an example, let's consider the same link from Fig. \ref{kauffmanexample} and compute its multi-variable AIP. The same link with its new coloring is pictured in Fig. \ref{kauffmanbilabel}. Ordering of components now matters, so we will take the component with the $a$ labels as component one, and the other as component two.
\begin{figure}[!h]
\centering
\includegraphics[scale=0.2]{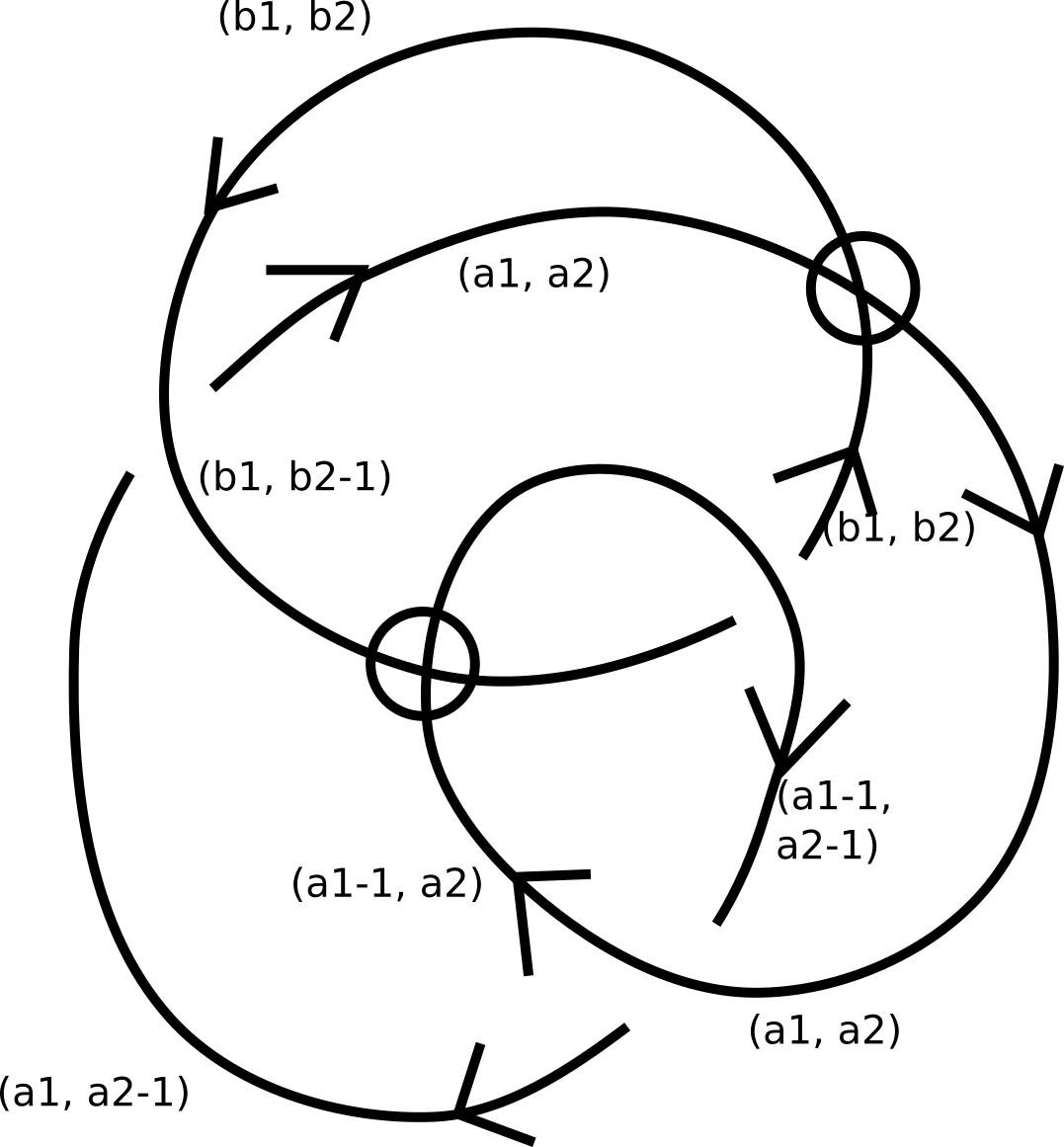}
\caption{The same link from Fig. \ref{kauffmanexample} with a bilabel coloring.}
\label{kauffmanbilabel}
\end{figure}
As before, we compute the polynomial by determining the weights at every classical crossing. The link is the same as the one in Example \ref{example1}, so all the crossings are positive, which means we need to look at the left labels at every crossing. At the self-crossing, the weight is $W_+=a_1-a_1+a_2-(a_2-1)=1$ and the overstrand belongs to component one, so the crossing contributes $+t_1^1$ to the polynomial.
At the top left crossing, the weight is $b_1-a_1+b_2-a_2=B-A$ and the overstrand belongs to component $2$, so the crossing contributes $+t_2^{B-A}$ to the polynomial.
Finally, at the top right crossing the weight is $a_1-1-b_1+a_2-b_2=A-B-1$ and component $1$ is the overstrand, so the crossing contributes $+t_1^{A-B-1}$.
Overall, the multi-variable affine index polynomial of the link with the given coloring is $$p_L(t_1, t_2)=t_1+t_1^{A-B-1}+t_2^{B-A}-3.$$
Note that setting $t_1=t_2$ and $A-B=N$ recovers the polynomial invariant from \cite{virtualknotcobordismaffineindex}, whose value is found in Eq. \ref{polykauffmanexample}.
\end{example}

\begin{remark}
Changing the ordering of the components will simply permute the variables, as the ordering has no influence on the value of the weights.
\end{remark}

\begin{prop}
\label{prop2}
The multi-variable AIP is an order one Vassiliev invariant of the pair $(L,C)$. As a consequence, the invariants from \cite{affineindexpolynomial} and \cite{virtualknotcobordismaffineindex} are also order one Vassiliev invariants.
\end{prop}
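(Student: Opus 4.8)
The plan is to reduce the statement to a single vanishing computation on diagrams with exactly two double points, exploiting the fact that an affine bilabeling is determined by the underlying flat shadow and is therefore untouched by changing over/under information. Write $p^{(k)}$ for the double‑point extension of the multi‑variable AIP, defined from $p$ in the usual way by the alternating resolution of double points. First I would invoke the standard reduction for finite‑type invariants: if $p^{(2)}$ vanishes on every colored diagram with two double points, then $p^{(k)}$ vanishes for every $k\geq 2$, because resolving one of the $k$ double points gives $p^{(k)}(D)=p^{(k-1)}(D_+)-p^{(k-1)}(D_-)$ and one descends on $k$. So the task reduces to proving $p^{(2)}\equiv 0$; together with exhibiting one diagram carrying a single double point on which $p^{(1)}=p_{(D_+,C)}-p_{(D_-,C)}\neq 0$ — e.g. promoting a suitable self‑crossing, such as the one in the link of Example~\ref{example1}, to a double point — this gives order exactly one rather than zero.

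Before this one must pin down what ``$(L,C)$ with double points'' means. For labeling purposes a double point is simply a flat crossing, so the natural setting is a link diagram whose double points are self‑crossings (so that every resolution remains compatible), equipped with an affine bilabeling $C$; the point to check here is that $C$ then restricts to a bilabeling of \emph{every} resolution, with the \emph{same} arc labels in all of them, which holds because the propagation rules of Fig.~\ref{affinebilabeling} never refer to a crossing's sign or to which strand is the overstrand — the index change of a strand is read off from its local direction alone. Likewise the partition of the strands into components is common to all resolutions.

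For the vanishing I would write $p_{(D,C)}=\sum_c\kappa(c)$ with $\kappa(c)=sgn(c)\bigl(t_{i(c)}^{W_\pm(c)}-1\bigr)$, where for a diagram $D$ with two double points the classical crossings of a resolution $D_{\epsilon_1\epsilon_2}$ ($\epsilon_1,\epsilon_2\in\{+1,-1\}$) are the two resolved double points $c_1,c_2$ together with the crossings $c_3,\dots,c_m$ common to all four resolutions. Since the arc labels and the component structure do not depend on $(\epsilon_1,\epsilon_2)$, the summand $\kappa(c_k)$ is a fixed element of $\Z[t_1^{\pm1},\dots,t_n^{\pm1}]$ for $k\geq 3$, the summand $\kappa(c_1)$ depends on $\epsilon_1$ only (through its sign, through the choice of $W_+$ versus $W_-$, and through which component is the overstrand — constant when $c_1$ is a self‑crossing), and $\kappa(c_2)$ depends on $\epsilon_2$ only. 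Hence in
\[ p^{(2)}(D)=\sum_{\epsilon_1,\epsilon_2}\epsilon_1\epsilon_2\,p_{(D_{\epsilon_1\epsilon_2},C)} \]
the contributions of $c_3,\dots,c_m$ carry a factor $\bigl(\sum_{\epsilon_1}\epsilon_1\bigr)\bigl(\sum_{\epsilon_2}\epsilon_2\bigr)$, the contribution of $c_1$ carries a factor $\sum_{\epsilon_2}\epsilon_2$, and that of $c_2$ a factor $\sum_{\epsilon_1}\epsilon_1$; as $\sum_{\epsilon}\epsilon=0$, each term vanishes, so $p^{(2)}\equiv 0$ and the multi‑variable AIP is Vassiliev of order $\leq 1$.

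For the ``as a consequence'' clause I would note that, by Proposition~\ref{prop1} and the example following it, the single‑variable AIP of a compatible colored link is the image of the multi‑variable one under the ring homomorphism $\Z[t_1^{\pm1},\dots]\to\Z[t^{\pm1}]$ sending every $t_i\mapsto t$ (viewing an ordinary coloring as a bilabeling with vanishing second coordinates, so that $A=a_1+a_2$ plays the role of the old label $a$, etc.). A ring homomorphism is $\Z$‑linear and hence commutes with the alternating sum defining $p^{(k)}$, so applying it to $p^{(k)}\equiv 0$ ($k\geq 2$) shows the invariants of \cite{affineindexpolynomial} and \cite{virtualknotcobordismaffineindex} are of order $\leq 1$, and they are not of order $0$ since they already separate virtual knots from the unknot. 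I expect the only real obstacle to be the foundational bookkeeping of the second paragraph — setting up the extension on colored diagrams carrying double points and checking that each resolution inherits the same bilabeling and stays compatible; once that is in place the remainder is a one‑line character sum.
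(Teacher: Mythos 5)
Your argument is essentially the paper's: both rest on the observation that the affine bilabeling is determined by the flat shadow, so all four resolutions of a two-double-point diagram carry identical arc labels, and the alternating sum then factors and vanishes crossing by crossing; your character-sum formulation is just a cleaner rendering of the paper's pairing of terms, and the descent from $k$ double points to $k=2$ is a standard addition the paper leaves implicit. Two small issues. First, your restriction to double points that are self-crossings is both unnecessary and, if kept, a gap: to conclude order $\leq 1$ you must check \emph{all} two-double-point diagrams, including those whose double points are external. The restriction is not needed because compatibility is defined via planar (signed) intersection numbers, which are read off from the flat diagram and hence are identical for every resolution of any double point; and indeed your own computation already treats the external case, since $\kappa(c_1)$ still depends only on $\epsilon_1$ there. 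Second, your justification that the original knot invariant of \cite{affineindexpolynomial} has order exactly one --- that it ``separates virtual knots from the unknot'' --- does not suffice: an order-zero invariant is one unchanged by crossing changes, and virtual knots are not in general related to the unknot by crossing changes, so nonconstancy does not preclude order zero. One needs an explicit singular virtual \emph{knot} with one double point on which the extension is nonzero; the paper supplies one (Fig.~\ref{aipnonzero}, from \cite{henrich}), whose resolutions are the unknot and a knot with polynomial $t^2+t^{-2}-2$. Your proposed singular example (promoting the self-crossing of the link in Example~\ref{example1}) does work for the multi-variable invariant and, after setting $t_1=t_2=t$, for the link invariant of \cite{virtualknotcobordismaffineindex}, but being a link it says nothing about the knot case.
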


\begin{remark}
While we are not surprised that the original AIP is a Vassiliev invariant of order one, as so are many other index-type polynomials, we believe to be the first to explicitly state and prove that the (original) Affine Index Polynomial is a Vassiliev invariant of order one.
This is consistent with previous results \cite{indexpolyvirtualtangles}, where we showed that the Wriggle polynomial (which coincides with the AIP for virtual knots \cite{linkingnumberaffineindex}) is an order one Vassiliev invariant.
\end{remark}

Let us now extend this definition to non-compatible virtual links.
Start with an oriented, ordered, non-compatible virtual link $L$; to define a coloring on it, pick a starting point for each component, start the label at an arbitrary pair of variables, and propagate the labels using the same rules as before, pictured in Fig. \ref{affinebilabeling}.
When we get back to the starting point, we realize there is a slight issue: the first label (the one that tracks self-crossings) will correctly wrap around, but the second label (the one tracking external crossings) will end up as the starting label plus the intersection number of the component.
We then decide to ``discharge'' the excess intersection number on the starting point to make the labeling well-defined; we thus assign to each starting point the intersection number of that component, and call it the \emph{weight of the component}.
An example of a non-compatible link with such a coloring can be seen in Fig. \ref{noncompatexample}.

\begin{figure}[!h]
\centering
\includegraphics[scale=0.17]{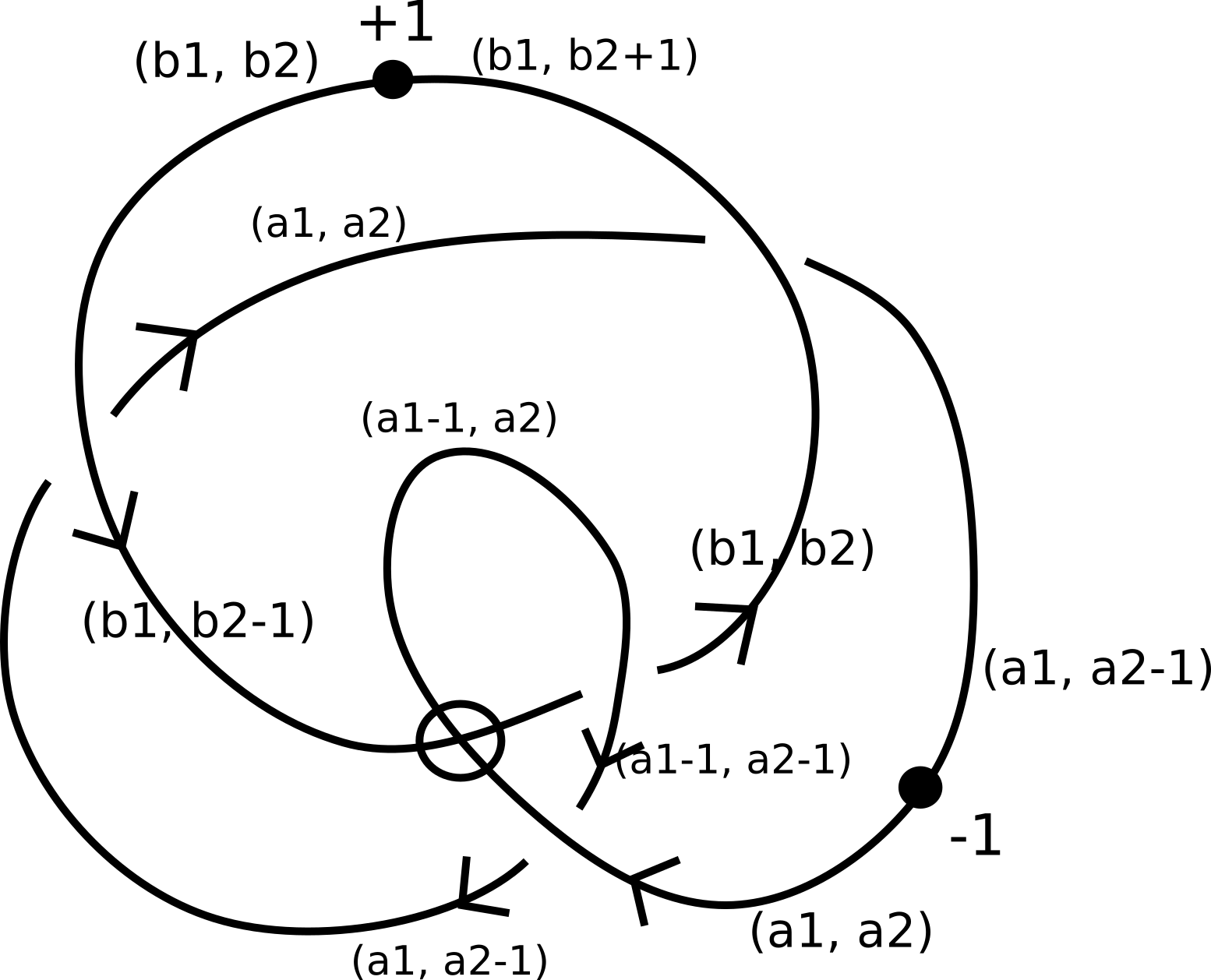}
\caption{A coloring for a non-compatible link. The starting points of our coloring are marked as thick dots, and the numbers next to them are the weights associated to the components.}
\label{noncompatexample}
\end{figure}

Once we have a coloring, we define the multi-variable AIP in the same exact way as before: compute the weight $W_{\pm}(c)$ of each classical crossing, take that weight as the exponent of the variable $t_i$ corresponding to the overstrand of the crossing, and take the signed sum over each crossing, subtracting the writhe at the end.
We still call the resulting polynomial the multi-variable AIP of the non-compatible link $L$.

\begin{example}
Consider the example of Fig. \ref{noncompatexample}; as before, the component with the $a$ labels will be component one, and the component with the $b$ labels will be component two.
The link now has four crossings to consider: a self-crossing at the bottom, and three external crossings.
The self-crossing of component two has weight $1$, so it will contribute $+t_1$ to the polynomial.
As for the external crossings, starting from the one at the top right and going counterclockwise, their contributions are respectively $-t_2^{B-A+1}$, $+t_2^{B-A}$, $+t_1^{A-B-1}$, where we use the labeling convention described in Remark \ref{labelingconvention}.
Overall, this example has multi-variable AIP equal to $$p_L(t_1, t_2)=t_1+t_1^{A-B-1}+t_2^{B-A}-t_2^{B-A+1}-2$$
\end{example}

\begin{prop}
\label{prop3}
The multi-variable AIP is an order one Vassiliev invariant of the pair $(L,C)$, where $L$ is a virtual link and $C$ is the associated coloring.
\end{prop}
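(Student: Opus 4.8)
The plan is to check directly the criterion for order $\le 1$: writing $D_c$ for the diagram obtained from $D$ by switching the over/under information at a classical crossing $c$, we must show that for every virtual link diagram $D$ with coloring $C$ and every pair of distinct classical crossings $c_1,c_2$,
\[
p_{D}-p_{D_{c_1}}-p_{D_{c_2}}+p_{D_{c_1c_2}}=0 ,
\]
since this is precisely the vanishing of the extension of $p$ to diagrams with two double points. The argument runs along the same lines as for the compatible case (Proposition~\ref{prop2}); the only genuinely new point is that the discharging of the intersection numbers onto the base points must be seen to be harmless, which is the content of Step~1. (That $p$ is also a Reidemeister invariant of $(L,C)$ for non-compatible $L$ follows exactly as in Proposition~\ref{prop1}: every move can be performed away from the chosen base points, and the discharged weights never enter the local computations.)

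\emph{Step 1.} The bilabeling, and the weight of each component, depend only on the underlying oriented flat diagram, not on the over/under data. Indeed, the index change of a strand at a crossing (Fig.~\ref{affinebilabeling}) is determined by the directions of the two strands — bottom-left-to-top-right versus bottom-right-to-top-left — and is insensitive to which is the overstrand; so propagating from the fixed base points produces the same bilabels on every arc of $D$ and of $D_c$. For the same reason the weight of a component, which is the sum of the $\pm 1$ index changes over its external crossings, is unchanged by any crossing switch, so the discharge is forced and identical for $D$ and $D_c$: the two diagrams are compared with literally the same coloring $C$. Consequently, for every crossing $c'\neq c$ the weight $W_{\pm}(c')$, the sign $\mathrm{sgn}(c')$, and the overstrand component are the same in $D$ and in $D_c$, so $c'$ contributes the identical monomial to $p_{D}$ and to $p_{D_c}$.

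\emph{Step 2.} By Step~1, $p_{D}-p_{D_c}$ reduces to the difference of the two terms coming from $c$ itself. Using $W_-(c)=-W_+(c)$ — immediate from $W_+=a-(c+1)+b-d$, $W_-=c-(a-1)+d-b$ — and noting that switching $c$ flips $\mathrm{sgn}(c)$ and, if $c$ is external, exchanges which component is the overstrand, a one-line computation gives, for $c$ positive in $D$,
\[
p_{D}-p_{D_c}=t_i^{\,W_+(c)}+t_i^{\,-W_+(c)}-2
\]
when $c$ is a self-crossing of $L_i$, and
\[
p_{D}-p_{D_c}=t_i^{\,W_+(c)}+t_j^{\,-W_+(c)}-2
\]
when $c$ is external with overstrand on $L_i$ and understrand on $L_j$, with the negatives of these expressions when $c$ is negative in $D$. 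In every case $p_{D}-p_{D_c}$ depends only on data local to $c$: the sign of $c$ in $D$, the weight $W_+(c)$, and the component(s) meeting at $c$. By Step~1 this local data is unchanged if we further switch a different crossing $c_2$ (the diagrams $D$ and $D_{c_2}$ coincide near $c_1$, and so do their bilabelings), hence $p_{D}-p_{D_{c_1}}=p_{D_{c_2}}-p_{D_{c_1c_2}}$, which is exactly the identity displayed above. Thus $p$ is a Vassiliev invariant of order $\le 1$; it is not of order $0$, since the self-crossing formula already produces a nonzero value on a one-double-point diagram whenever $W_+(c)\neq 0$, as in the examples computed above.

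The single real obstacle is Step~1: one has to be certain that a crossing switch alters neither the propagated bilabels nor the discharged component weights, i.e.\ that both are invariants of the underlying oriented flat diagram, so that the polynomials of $D$ and $D_c$ are genuinely being compared with the same coloring. Everything downstream is formal bookkeeping. Since Steps~1 and~2 go through verbatim when $L$ is compatible (in which case all component weights vanish), this argument simultaneously reproves Proposition~\ref{prop2}.
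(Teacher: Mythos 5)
Your proof is correct and follows essentially the same route as the paper: the key observation in both is that the bilabeling (and, for non-compatible links, the discharged component weights) depends only on the oriented flat diagram, so unswitched crossings cancel in the alternating sum and the switched crossings' contributions pair off; your second-difference formulation is just a repackaging of the paper's four-term expansion. Your Step 1 usefully makes explicit the point the paper only asserts in passing, namely that the discharge of intersection numbers onto the base points is unaffected by crossing switches.
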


\begin{remark}
If the link is non-compatible, performing a Reidemeister move on an arc that contains a starting point with nonzero weight will change the value of the invariant. As such, we must restrict Reidemeister moves to arcs and crossings away from the starting point.
\end{remark}

\subsection{Changing the starting point}
\label{changingstartingpoints}

The multivariable AIP is an invariant of the pair $(L, C)$, where $L$ is a virtual link and $C$ is the associated coloring. 
However, it can be useful to understand how the invariant will change as we move the starting points along the components of the link; we will describe this behavior in this section.
First of all, note that a compatible link is simply a non-compatible link whose weights are all zero. We will thus focus on the non-compatible case, and describe the behavior on compatible links as a corollary.

Clearly, the key to understanding how the polynomial changes is to fully understand how the coloring and weights change as we move the starting point around the knot. 
Moreover, virtual crossings do not change the coloring, so for the rest of this section when saying ``crossing'' we implicitly mean ``classical crossing''.
Figures \ref{changingstartingpointpositive} and \ref{changingstartingpointnegative} pictorially represent this change.

\begin{figure}
\centering
\includegraphics[scale=0.1]{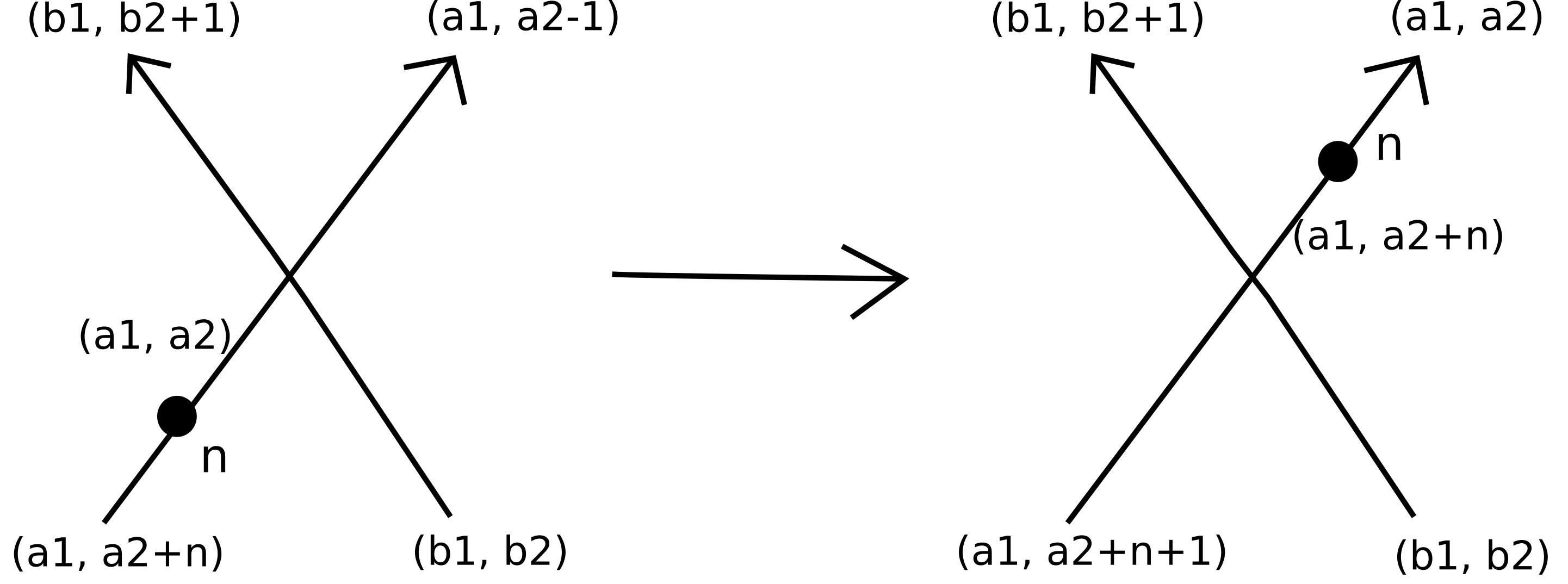}
\caption{How the bilabel changes as we move a starting point of weight $n$ through a crossing using the bottom-left-to-top-right strand.}
\label{changingstartingpointpositive}
\end{figure}

\begin{figure}
\centering
\includegraphics[scale=0.1]{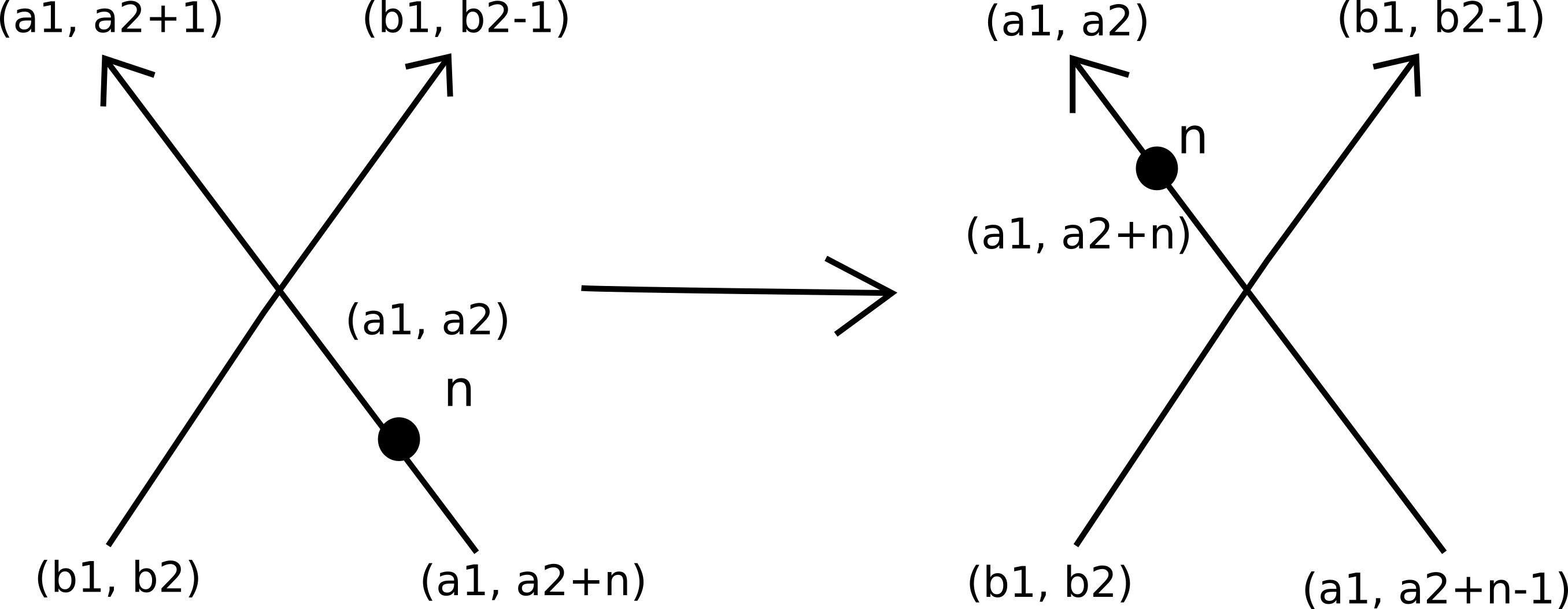}
\caption{How the bilabel changes as we move a starting point of weight $n$ through a crossing using the bottom-right-to-top-left strand.}
\label{changingstartingpointnegative}
\end{figure}

The labeling doesn't care for the sign of the crossing, but only for the direction of the strands at the crossing, so to study the bilabeling we can use flat crossings.
In Fig. \ref{changingstartingpointpositive}, we illustrate the change in the case the starting point belongs to a strand going through a $-1$ index change (see Definition \ref{affinelabeling} for a refresher on index change).
In that case, all the labels of the associated component are shifted by $+1$; this is easily explained by realizing that moving the starting point past the crossing means the postponing the $-1$ index change to the end of the circuit around the knot. 
Since the total index change stays the same, every other label needs to be increased by $1$.
In the other case, corresponding to the overpass of a negative crossing or the underpass of a positive crossing, and pictured in Fig. \ref{changingstartingpointnegative}, the labels of the associated component will all shift by $-1$.

Note that in the self-crossing case the shift will eventually propagate to the other strand involved in the crossing, so we get a picture like the one of Fig. \ref{changingstartingpointself}.
Also note that the weight of the component had no impact on how much we shifted the bilabel.
Finally, if we were to move the starting point all the way around the component, we would get back to our original label, as the total change in label is equal to the intersection number of the component.

\begin{figure}
\centering
\includegraphics[scale=0.1]{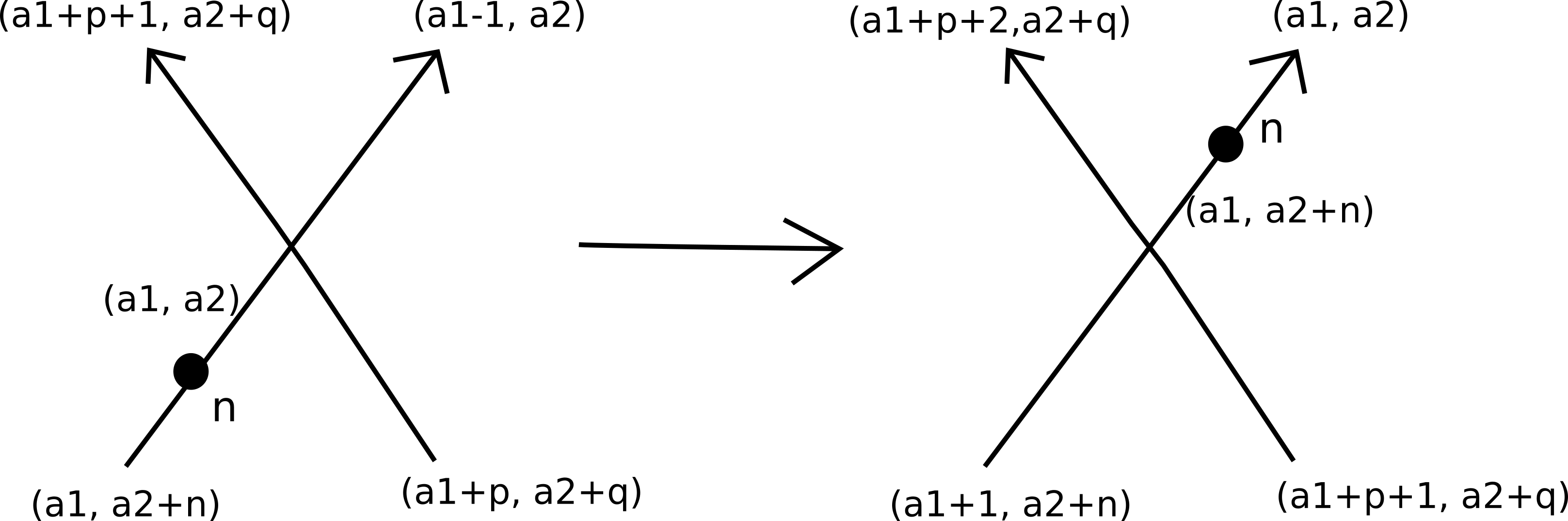}
\caption{An equivalent of Fig. \ref{changingstartingpointpositive} for the self-crossing case. Here $p, q$ are integers capturing the index change accumulated in returning to the self-crossing.}
\label{changingstartingpointself}
\end{figure}
\begin{figure}
\centering
\includegraphics[scale=0.1]{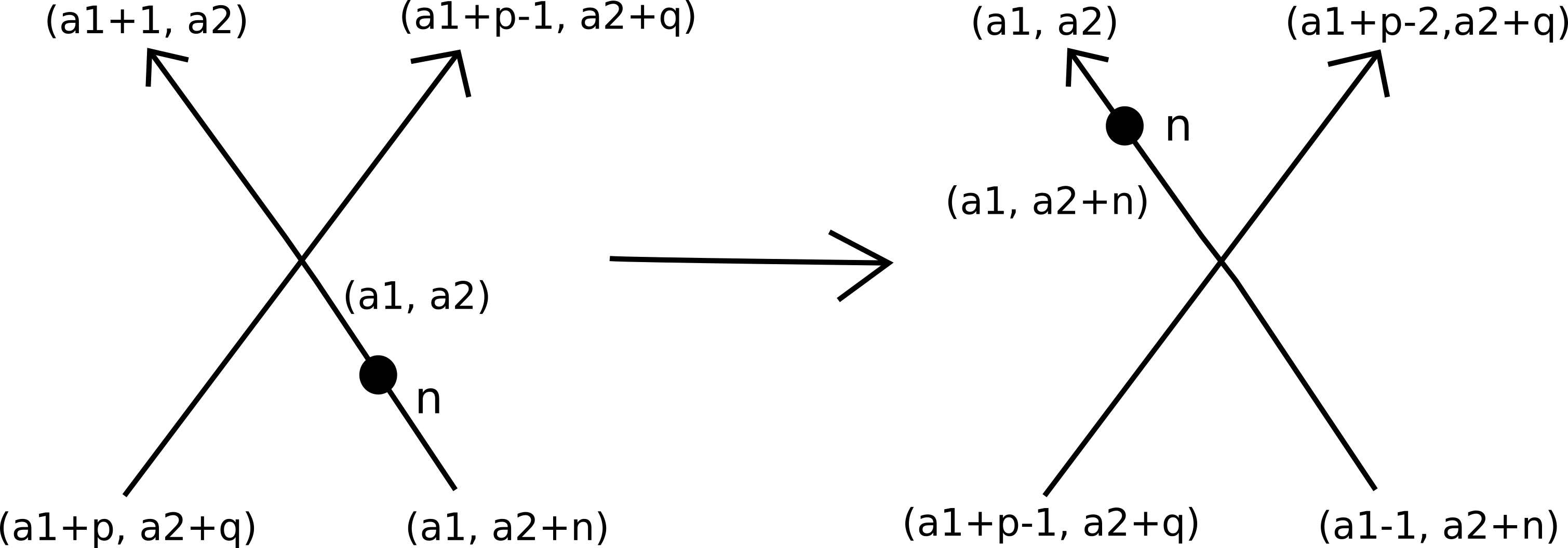}
\caption{An equivalent of Fig. \ref{changingstartingpointnegative} for the self-crossing case, with $p,q$ defined as in Fig. \ref{changingstartingpointself}.}
\label{changingstartingpointself2}
\end{figure}

Now that we understand how the bilabel changes, let's turn our attention to the weights of each crossing.
Let's start with Fig. \ref{changingstartingpointpositive}: we want to compare the weights of that crossing before and after the shift, for both positive and negative crossings.
If the crossing of Fig. \ref{changingstartingpointpositive} is positive, the weight of the left crossing is $A-B-1$ and the weight of the right crossing is $A-B+n+1-1=A-B-1+(n+1)$; if the crossing is negative, the weight of the left crossing is $B-A+1$ and the weight of the right crossing is $B-A-n=B-A+1-(n+1)$.
A similar computation for Fig. \ref{changingstartingpointnegative} shows that if the crossing is positive the weight goes from $B-A-1$ to $B-A-n=B-A-1-(n-1)$, and if the crossing is negative the weight goes from $A-B+1$ to $A-B+1+(n-1)$.

If we carefully organize the above information, we easily conclude the following lemma.
\begin{lemma}
\label{externalcrossingweightlemma}
If we move the starting point of a component of weight $n$ past an external crossing, the associated crossing weight will change by $\pm (n \mp 1)$, where the first $\pm$ is determined by whether we move the starting point through an overstrand ($+$) or understrand ($-$) of the crossing, and the $\mp 1$ is the opposite of the index change at that crossing for the given strand. All the other labels of the component will shift by $\mp1$ as well.
\end{lemma}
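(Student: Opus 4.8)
The statement bundles two claims: that every other label of the moved component shifts by a fixed amount $\mp 1$, and that the weight of the external crossing in question changes by $\pm(n\mp 1)$. The plan is to treat them in that order, since the second rests on the first. For the label shift I would simply invoke the observation already made in the discussion above: moving the starting point past a strand whose index change is $\varepsilon\in\{+1,-1\}$ amounts to postponing (or advancing) that single $\varepsilon$ jump to the very end of the loop around the component. Since the total index change accumulated over one circuit of the component is forced --- it equals the component's intersection number, i.e.\ its weight $n$ --- every remaining label must absorb the difference and hence shifts by $-\varepsilon$, which is precisely the claimed $\mp 1$ (and is visibly independent of $n$). For an \emph{external} crossing between $L_i$ and $L_j$, moving the starting point of $L_i$ alters only the two $L_i$-arcs at that crossing; the $L_j$-arc and the sign of the crossing are untouched, so no hidden contribution can slip in.

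Next I would compute the change in the crossing weight directly. Moving $L_i$'s starting point past the external crossing $c$ does two things at once to the bilabels sitting at $c$: it transports the discharged weight $n$ from the incoming $L_i$-arc onto the outgoing one, and it applies the shift $-\varepsilon$ to the $L_i$-labels there. Substituting this into the weight formula $W_+(c)=a-(c+1)+b-d$ (respectively $W_-(c)$) --- equivalently, reading the before/after labels straight off Figures~\ref{changingstartingpointpositive} and \ref{changingstartingpointnegative} --- produces, for the four combinations of (sign of $c$) $\times$ ($\varepsilon=\mp1$), the weight changes
$$+(n+1),\qquad -(n+1),\qquad -(n-1),\qquad +(n-1),$$
which are exactly the values recorded in the calculations preceding the lemma. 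In each case the magnitude is $n-\varepsilon$, and the overall sign is $+$ when the moved strand is the overstrand of $c$ and $-$ when it is the understrand.

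Finally I would package the bookkeeping. At a crossing, the bottom-left-to-top-right strand (index change $-1$) is the overstrand precisely when $c$ is positive, and the bottom-right-to-top-left strand (index change $+1$) is the overstrand precisely when $c$ is negative. Running the four cases through this dictionary, the weight change is uniformly $(\text{overstrand}:+,\ \text{understrand}:-)\cdot(n-\varepsilon)$, where $-\varepsilon$ is the opposite of the index change of the moved strand; that is exactly $\pm(n\mp1)$ with the first sign governed by over/under and the $\mp1$ by the index change, which is the statement.

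I do not expect a genuine obstacle: the difficulty is entirely in the sign conventions. The one place to be careful is consistency --- fixing once and for all which strand of a positive (resp.\ negative) crossing is the overstrand, which convention $W_\pm$ uses, and in which direction the weight $n$ is transported --- and then verifying that these choices make all four cases collapse to a single formula. I would also flag explicitly that the lemma concerns external crossings only: at a self-crossing of the moved component the shift eventually propagates around to the second arc entering the same crossing (Figures~\ref{changingstartingpointself} and \ref{changingstartingpointself2}), so that crossing's weight changes by a different amount and must be handled separately.
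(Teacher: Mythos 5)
Your proposal is correct and matches the paper's own argument: the paper likewise derives the label shift $\mp 1$ from the "postponed index change" observation and then reads the four before/after weights off Figures~\ref{changingstartingpointpositive} and \ref{changingstartingpointnegative} (obtaining exactly your values $+(n+1)$, $-(n+1)$, $-(n-1)$, $+(n-1)$) before repackaging them via the over/understrand dictionary. Your explicit remark that the self-crossing case behaves differently because the shift propagates to the second arc is also exactly how the paper separates this lemma from Lemma~\ref{selfcrossingweightlemma}.
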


In the self-crossing case, we leave it to the reader to check the proof of the next lemma using a similar argument on Figs. \ref{changingstartingpointself} and \ref{changingstartingpointself2}.

\begin{lemma}
\label{selfcrossingweightlemma}
If we move the starting point of a component of weight $n$ past a self-crossing, the associated crossing weight will change by $-n$ if we moved the starting point past an undercrossing and by $+n$ if we moved the starting point past an overcrossing. 
All the other labels of the component will shift by $\mp1$, the opposite of the index change at that crossing.
\end{lemma}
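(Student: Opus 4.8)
\textbf{Proof proposal for Lemma~\ref{selfcrossingweightlemma}.}

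The plan is to run exactly the argument that gives Lemma~\ref{externalcrossingweightlemma}, with one extra feature: at a self-crossing both strands meeting at $c$ belong to the same component, so after we move the starting point the induced shift propagates all the way around the component and returns to the \emph{other} strand at $c$. This returning shift is what the integers $p,q$ in Figs.~\ref{changingstartingpointself} and \ref{changingstartingpointself2} record. As in the external case the bilabeling ignores the sign of $c$ and depends only on the strand directions, so it is enough to analyze the two flat pictures there, according to whether the starting point is dragged along the bottom-left-to-top-right arc (index change $-1$) or the bottom-right-to-top-left arc (index change $+1$). By the same reasoning used for Figs.~\ref{changingstartingpointpositive} and \ref{changingstartingpointnegative}, moving the starting point past that arc postpones its index change to the end of the circuit, so every other first-coordinate label of the component is shifted by the opposite amount ($+1$ or $-1$); no external crossing is traversed, so the second coordinate of those labels is untouched, and this is precisely the ``all other labels shift by $\mp 1$'' assertion.

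For the change of the crossing weight I would write out the four corner bilabels of $c$ before and after the move and substitute them into $W_{+}(c)=a-(c+1)+b-d$ and $W_{-}(c)=c-(a-1)+d-b$. Two observations then finish the job. First, the first coordinate of \emph{every} label of the component, in particular of all four corners of $c$, changes by one and the same constant $\mp 1$; since the two first-coordinate slots in $W_{\pm}(c)$ occur with opposite signs, this contributes nothing. Second, $c$ itself being a self-crossing, the second coordinates are unchanged near $c$ \emph{except} that the discharge of the component's weight $n$ has migrated to the other side of the traversed arc; hence the corners lying on that arc have their second coordinates changed by $\pm n$, while the corners on the other arc do not change at all. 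Now $W_{\pm}(c)$ involves exactly one corner from each of the two arcs of $c$, so exactly one of its two second-coordinate slots is affected and $W_{\pm}(c)$ changes by $\pm n$. Comparing the standard-position pictures against the sign pattern of $W_{+}$ and $W_{-}$ shows this is $+n$ when the traversed arc is the overcrossing and $-n$ when it is the undercrossing; doing this for each of the two figures and each sign of $c$ is four short computations of the same kind already performed in the text before Lemma~\ref{externalcrossingweightlemma}.

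The delicate point---and the one place a sign could go wrong---is the middle step: keeping the two coordinates straight as the shift wraps around, so that the contributions recorded by $p$ and $q$ really do make the first-coordinate shift uniform on all four corners, and, above all, correctly locating the weight-$n$ discharge. The discharge always sits where the starting point currently is, so moving the starting point across the traversed arc relocates it from one corner of $c$ to the adjacent one; reconciling this with Figs.~\ref{changingstartingpointself} and \ref{changingstartingpointself2} is exactly what makes the four computations close up, whereas a misplacement would leave a spurious $\pm 1$ term. Once this is settled the lemma drops out, which is why it can comfortably be left to the reader.
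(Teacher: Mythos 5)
Your argument is correct and is exactly the one the paper has in mind: the paper leaves this lemma to the reader with a pointer to Figs.~\ref{changingstartingpointself} and \ref{changingstartingpointself2}, and you carry out that ``similar argument,'' correctly identifying the two key points --- the wrap-around makes the first-coordinate shift uniform on all four corners of $c$ (so it cancels in $W_{\pm}$), while the relocated weight-$n$ discharge hits exactly one of the two second-coordinate slots, giving the $\pm n$ change. No gaps; the remaining work is the four routine sign checks you describe.
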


Using the above two lemmas, we can finally determine how moving a starting point past a crossing changes the multi-variable AIP.

\begin{prop}
\label{prop4}
Let $L$ be an oriented, ordered virtual link, and $C$ an affine bilabel coloring for it. Suppose we move the starting point of component $i$, whose starting label is $(a_1, a_2)$ and component weight is $n$, past one crossing $c$ involving components $i$ and $j$ (where potentially $j=i$), and call this new coloring $C'$. We can obtain $p_{(L, C')}$ from $p_{(L, C)}$ by replacing the uppercase label $A=a_1+a_2$ with $A-i$, where $i$ is the index change of the strand at the given crossing, and multiplying the term in $p_{(L, C)}$ corresponding to the crossing $c$ by $t_i^{n}$ if we moved past an overcrossing or $t_j^{-n}$ if we moved past an undercrossing. 
\end{prop}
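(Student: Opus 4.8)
The plan is to track two things as the start point of component $L_i$ slides across the single crossing $c$: how the affine bilabeling of $L_i$ changes on each edge of the diagram, and, as a consequence, how the weight $W_\pm(c')$ of every classical crossing $c'$ changes. Recall that the propagation rule modifies the first entry of the bilabel at a self-crossing and the second entry at an external crossing. Lemmas \ref{externalcrossingweightlemma} and \ref{selfcrossingweightlemma} already record the local change of $W_\pm(c)$ and assert that ``all the other labels of the component shift by $\mp1$''; what remains is to verify that this shift is \emph{uniform} away from $c$ and then to reassemble the polynomial crossing by crossing. Concretely, I would traverse $L_i$ as a cyclic list of edges $\alpha_0,\alpha_1,\ldots$ with $c$ sitting between $\alpha_0$ and $\alpha_1$ on the $L_i$-strand, and in the new coloring $C'$ reassign the fresh starting bilabel --- which I again call $(a_1,a_2)$ --- to $\alpha_1$. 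The index changes of $L_i$ sum to $0$ in the first coordinate (a self-crossing is met twice, with opposite index changes) and to $n$ in the second (this is exactly the component weight); telescoping these, every edge of $L_i$ other than the ones lying between the old and new positions of the start point has its bilabel shifted by $-\iota$ in the coordinate touched at $c$, where $\iota=\pm1$ is the index change of the $L_i$-strand at $c$, while the portion of $L_i$ incident to $c$ that feeds into $W_\pm(c)$ is shifted by that same $-\iota$ \emph{and} additionally by $n$ in the second coordinate, this extra $n$ being precisely the discharged component weight moving across $c$ (both $L_i$-edges at $c$ shift this way, so it is immaterial which enters the formula, and when $c$ is a self-crossing its ``other'' strand is simply another edge of $L_i$ receiving the uniform shift, so that case needs no separate treatment).

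Next I would observe that for any classical crossing $c'$ the weight $W_\pm(c')$ has the shape $\pm(\text{sum of the two entries of one strand's bilabel})\mp(\text{same for the other strand})+(\text{integer})$, so the uppercase variable $A=a_1+a_2$ occurs in $W_\pm(c')$ with coefficient exactly $+1$ or $-1$ when $c'$ is an external crossing of $L_i$ with another component, cancels at self-crossings of $L_i$, and is absent at crossings not meeting $L_i$. Combined with the previous paragraph this gives: at a self-crossing of $L_i$ other than $c$ both strands shift equally so $W_\pm$ is unchanged (consistent with the absence of $A$); at an external crossing $c'\neq c$ of $L_i$ the weight changes by $-\iota$ times the coefficient of $A$, which is exactly the effect of the substitution $A\mapsto A-\iota$; and crossings not meeting $L_i$ are untouched, again matching the substitution. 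Hence $A\mapsto A-\iota$ reproduces every crossing-weight change except at $c$ itself.

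Finally, at $c$ the relevant $L_i$-edge picks up the additional $+n$ in the second coordinate, so by Lemma \ref{externalcrossingweightlemma} (or Lemma \ref{selfcrossingweightlemma} when $j=i$) the exponent $W_\pm(c)$ changes by $\pm n$ on top of the $\mp\iota$ already delivered by $A\mapsto A-\iota$ (when $c$ is a self-crossing the $A$-part is absent and the net change is exactly $\pm n$). Writing $p_{(L,C)}$ in the form $\sum_c sgn(c)\,t^{W_\pm(c)}-writhe(L)$, the monomial recording $c$ carries the variable of the over-strand, so adding $\pm n$ to its exponent amounts to multiplying that monomial by $t_i^{\,n}$ when the start point crossed the over-strand (so the over-strand is $L_i$ and the sign is $+$) and by $t_j^{\,-n}$ when it crossed the under-strand (so the over-strand is $L_j$ and the sign is $-$); the case $j=i$ collapses to the self-crossing case and is consistent with both formulas, and the $writhe(L)$ term is unaffected since no crossing changes sign. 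I expect the only genuinely delicate point to be fixing the dictionary between ``index change'', ``passing the over- versus the under-strand'', and ``top versus bottom of the weight formula'' so that the sign of the residual $\pm n$ at $c$ simultaneously matches the $+/-$ of Lemmas \ref{externalcrossingweightlemma}--\ref{selfcrossingweightlemma} and the choice of $t_i$ versus $t_j$ on the $c$-monomial; once that is pinned down, the rest is the telescoping bookkeeping above.
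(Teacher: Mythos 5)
Your argument is correct and follows essentially the same route as the paper, which disposes of this proposition in one line as ``a direct consequence'' of Lemmas \ref{externalcrossingweightlemma} and \ref{selfcrossingweightlemma} together with the preceding discussion of how the bilabel shifts. You have simply written out the bookkeeping the paper leaves implicit --- the uniformity of the $-\iota$ shift away from $c$, the observation that $A$ appears in $W_\pm(c')$ with coefficient $+1$ ($-1$) when $L_i$ is the over- (under-) strand and cancels at self-crossings, and the residual $\pm n$ at $c$ --- and these details check out against the paper's sign conventions.
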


Having characterized how the invariant changes by moving a starting point past one crossing, we can relate the invariant on any two colorings $C, C'$ simply by moving the starting points one crossing and one component at a time, keeping track of the changes at each step.
We summarize this result in the next proposition

\begin{prop}
\label{prop5}
Let $L$ be an oriented, ordered virtual link, $C$ an affine bilabeling, and $C'$ a different affine bilabeling where we picked a different starting point for some or all of the components. 
For each component of $L$, track the crossings you pass in going from the starting point of $C$ to the starting point of $C'$; let $O_i$ be the set of all overcrossings of component $i$ you go through, and $U_i$ be the set of all undercrossings of component $i$ you go through. 
Finally, let $I_i$ be the total index change of component $i$ in going from the starting point of $L_i$ in $C$ to the starting point of $L_i$ in $C'$, and $n_i$ the weight of component $i$.

To obtain $p_{(L, C')}$ from $p_{(L, C)}$, subtract the total index change of each component from the relative uppercase label, and multiply the terms corresponding to the crossings of the sets $O_i, U_i$ by $t_i^{-n_i}$ for every crossing in $O_i$ and by $t_j^{n_i}$ for every crossing in $U_i$, where $j$ is the other component involved in the crossing (which could also be equal to $i$).
\end{prop}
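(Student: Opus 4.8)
The plan is to derive Proposition \ref{prop5} from Proposition \ref{prop4} by a straightforward induction on the number of crossings one must traverse in moving the starting points from their positions in $C$ to their positions in $C'$. The setup is that moving a starting point along its component is a finite sequence of elementary moves, each of which slides the starting point past exactly one classical crossing (virtual crossings do not affect the coloring, as noted in the text). Proposition \ref{prop4} tells us precisely what each such elementary move does to the polynomial, so iterating it and bookkeeping the cumulative effect will give the statement.

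First I would fix, for each component $i$, a path from the $C$-starting point to the $C'$-starting point along $L_i$, and enumerate the classical crossings encountered along that path in order; each crossing is traversed either as an overcrossing (putting it in $O_i$) or as an undercrossing (putting it in $U_i$), and each contributes its index change to the running total, whose final value is $I_i$. I would then process these elementary moves one at a time, in any fixed order (say, finish component $1$, then component $2$, and so on). At each step Proposition \ref{prop4} says: the uppercase label $A_i$ of the component being moved is decremented by the index change $\varepsilon$ of the strand at that crossing, and the single monomial in the current polynomial corresponding to crossing $c$ is multiplied by $t_i^{n_i}$ (over-move) or by $t_j^{-n_i}$ (under-move). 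Two observations make the induction close cleanly. The shifts of $A_i$ are additive, so after all the moves on component $i$ the label $A_i$ has been decreased by the sum of the index changes along the path, which is exactly $I_i$ — this is the ``subtract the total index change from the relative uppercase label'' clause. And the weight $n_i$ of the component is unchanged by any of these moves (Lemmas \ref{externalcrossingweightlemma} and \ref{selfcrossingweightlemma}, together with the remark that the component weight is the intersection number, which is a diagram invariant), so every over-move on component $i$ multiplies its crossing's monomial by the same $t_i^{n_i}$ and every under-move multiplies its crossing's monomial by $t_j^{-n_i}$ — matching the $O_i$ and $U_i$ clauses. (I note the bookkeeping convention: in Proposition \ref{prop4} an overcrossing move multiplies by $t_i^{+n}$, so the $t_i^{-n_i}$ appearing in Proposition \ref{prop5} for $O_i$ reflects the opposite direction of traversal, i.e.\ moving the starting point \emph{out of} rather than \emph{into} that arc; I would make this orientation of the path explicit so the signs are unambiguous.)

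The one point requiring a little care — and the step I expect to be the main obstacle — is checking that the elementary moves are genuinely independent, so that the cumulative effect is just the product/sum of the individual effects with no cross-terms. Concretely, when a starting point is slid past a self-crossing, the label shift propagates to the \emph{other} strand of that crossing as well (Fig.\ \ref{changingstartingpointself}), and one must verify that this does not disturb the monomial associated to some \emph{other} crossing that a later elementary move will act on; similarly, when the starting points of two different components are moved, one must check that the order of processing does not matter. The resolution is that each individual crossing's monomial $t_k^{W_\pm(c)}$ depends only on the four labels at that crossing, and the only ingredient that changes a given $W_\pm(c)$ as we move starting points is the uppercase-label substitution already accounted for (the internal integer part of each weight is unaffected, since those reflect index changes which are intrinsic to the diagram); hence the effects on distinct crossings' monomials are independent, and the effect on a single crossing's monomial is the product of the multiplicative factors accrued each time a starting point was slid past it. Once this independence is spelled out, the induction goes through and yields exactly the transformation rule stated, which specializes to the compatible case (all $n_i = 0$) as the corollary promised earlier. \qed
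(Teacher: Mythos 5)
Your proposal is correct and takes essentially the same route as the paper, which disposes of this proposition in a single clause (``Prop.~\ref{prop5} follows from repeated applications of Prop.~\ref{prop4}''); your write-up in fact supplies more than the paper does, namely the induction, the additivity of the index-change shifts, and the check that the elementary moves act independently on distinct crossings' monomials. You are also right to flag the sign mismatch between the $t_i^{n}$ of Proposition~\ref{prop4} and the $t_i^{-n_i}$ of Proposition~\ref{prop5} as a point needing an explicit convention --- note that the paper's own worked example (multiplying by $t_1^{n_1}$ after passing an overcrossing) follows Proposition~\ref{prop4}'s convention, so the exponents in the statement of Proposition~\ref{prop5} appear to carry a sign error rather than reflect a reversed direction of traversal.
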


\begin{example}
In Fig. \ref{noncompatexamplec} you can see a different choice of coloring for the same link as in Fig. \ref{noncompatexample}. The original colored link had invariant equal to $$p_{(L, C)}(t_1, t_2)=t_1+t_1^{A-B-1}+t_2^{B-A}-t_2^{B-A+1}-2.$$

\begin{figure}
\centering
\includegraphics[scale=0.17]{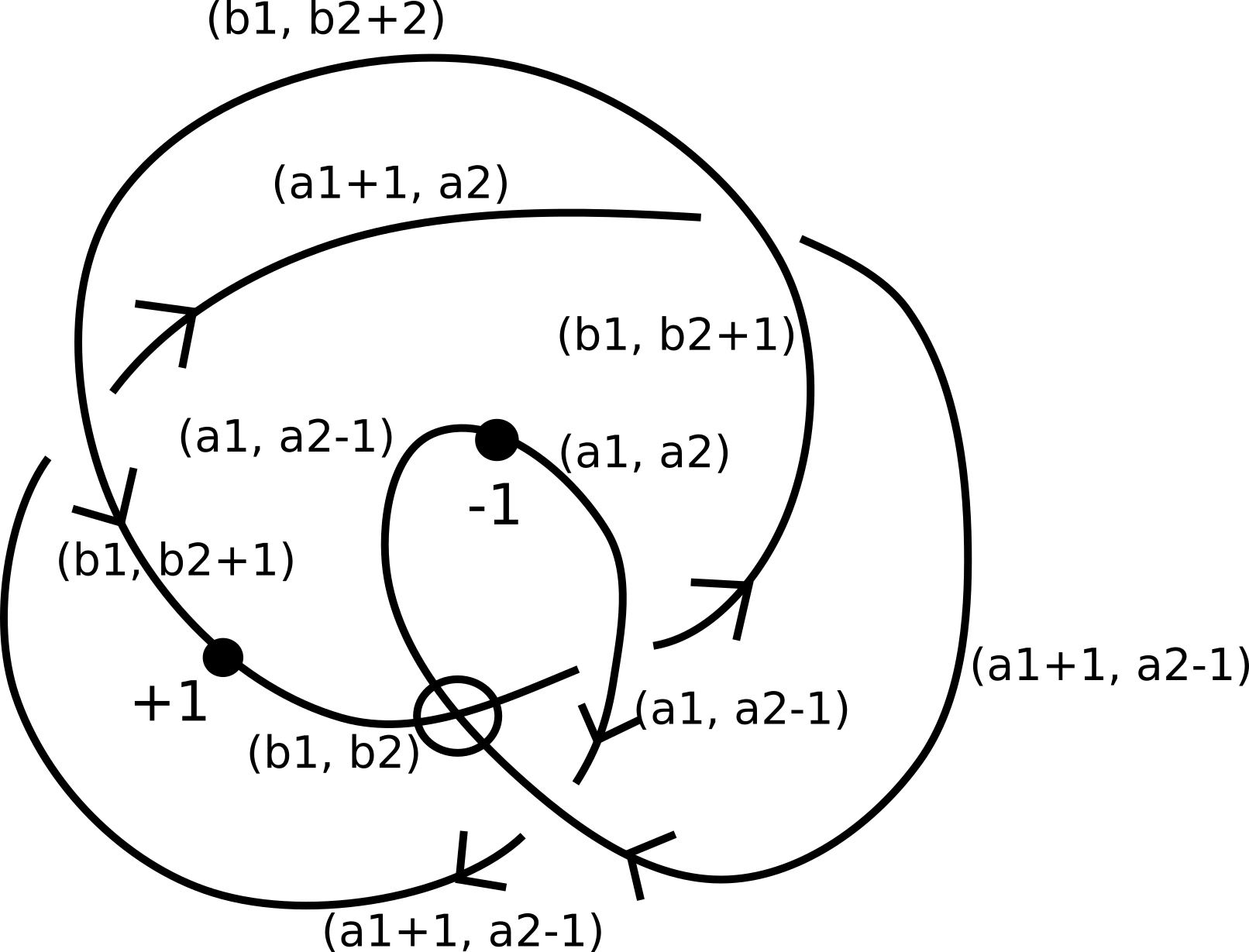}
\caption{Another coloring $C'$ for the link of Fig. \ref{noncompatexample}.}
\label{noncompatexamplec}
\end{figure}

To get from Fig. \ref{noncompatexample} to Fig. \ref{noncompatexamplec}, we moved the starting point of component one, whose weight is $n_1=-1$, past one crossing with index change $I_1=-1$; we went through the overcrossing of said crossing. We also moved the starting point of component two, whose weight is $n_2=+1$, past one crossing with index change $I_2=-1$; we also went through the overcrossing of said crossing.
According to our proposition, we can obtain $p_{(L, C')}$ by 
\begin{itemize}
\item replacing $A$ with $A-I_1=A+1$ and $B$ with $B-I_2=B+1$ in every term;
\item multiplying the term corresponding to the self-crossing (which was $t_1$) by $t_1^{n_1}=t_1^{-1}$;
\item multiplying the term corresponding to the top left crossing (which was $t_2^{B-A}$) by $t_2^{n_2}=t_2$.

\end{itemize}
This means we get
\begin{equation*}\begin{split}p_{(L, C'')}&=t_1^{-1}\cdot t_1+t_1^{(A+1)-(B+1)-1}+t_2\cdot t_2^{(B+1)-(A+1)}-t_2^{(B+1)-(A+1)+1}-2\\&=1+t_1^{A-B-1}+t_2^{B-A+1}-t_2^{B-A+1}-2.\end{split}\end{equation*}

A direct computation of the invariant from Fig. \ref{noncompatexamplec} gives the same result.
\end{example}

The above result is a bit unwieldy, because it requires us to keep track of what each crossing contributed to the multi-variable AIP. However, it helps us prove the following corollary.

\begin{cor}
Let $L$ be an ordered, oriented, compatible virtual link, and $C$ and $C'$ two distinct colorings for it. The self-crossing part of the multi-variable AIP (that is, the terms corresponding to the self-crossings of the various component) coincide for the colored links $(L,C)$ and $(L,C')$.\end{cor}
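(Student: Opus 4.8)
The plan is to deduce the statement directly from Proposition \ref{prop5}. Since $L$ is compatible, every component weight $n_i$ equals $0$. Passing from $C$ to $C'$ by moving starting points, Proposition \ref{prop5} says that $p_{(L,C')}$ is obtained from $p_{(L,C)}$ by two operations: (i) replacing each uppercase label $A_k$ by $A_k - I_k$, and (ii) multiplying certain crossing terms by factors $t_i^{-n_i}$ or $t_j^{n_i}$. Because $n_i = 0$ for all $i$, every factor in (ii) is $t_\bullet^{0} = 1$, so the only genuine change in going from $p_{(L,C)}$ to $p_{(L,C')}$ is the substitution (i) on the uppercase (arbitrary) labels.

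The key point is then that the self-crossing terms of $p_{(L,C)}$ contain no uppercase labels at all. Indeed, at a self-crossing of a component $L_i$, all four labels appearing in the crossing picture of Fig.\ \ref{affinebilabeling} are bilabels of the single component $L_i$, hence each has the form $(a_1 + p,\, a_2 + q)$ where $(a_1,a_2)$ is the starting bilabel of $L_i$ and $p, q$ are integers recording the accumulated index changes along the path from the starting point to that arc. Substituting into the weight formula $W_\pm$, the $a_1$'s cancel in the difference of first coordinates and the $a_2$'s cancel in the difference of second coordinates, so $W_\pm(c)$ is an integer independent of the coloring variables. Consequently each self-crossing term $sgn(c)\bigl(t_i^{W_\pm(c)} - 1\bigr)$ has no dependence on any uppercase label.

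Putting these together: operation (i) leaves every self-crossing term untouched (there are no uppercase labels in them to substitute into), and operation (ii) multiplies each term, in particular each self-crossing term, by $1$. Hence the self-crossing part of $p_{(L,C)}$ equals the self-crossing part of $p_{(L,C')}$, which is the claim.

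\textbf{Main obstacle.} The only delicate point is the claim that self-crossing weights are coloring-independent integers; this is implicit in the earlier definitions and visible in the worked examples, but is worth spelling out as above. Secondarily, one should confirm that Proposition \ref{prop5}'s description is exhaustive for self-crossings -- that moving a starting point does nothing to a self-crossing term beyond what (i) and (ii) record. This is immediate from Lemma \ref{selfcrossingweightlemma} (with $n=0$, the weight of the self-crossing one passes through does not change) together with the fact, noted in Section \ref{changingstartingpoints}, that all remaining labels of a component shift by a common amount, so every other self-crossing weight -- a difference of two labels of one component -- is also unchanged.
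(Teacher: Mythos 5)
Your proof is correct. It takes a slightly different route from the paper's: the paper argues directly from Lemma \ref{selfcrossingweightlemma}, which says that moving a starting point past a self-crossing changes that crossing's weight by $\pm n$ (and shifts all other labels of the component uniformly, leaving the remaining self-crossing weights, which are differences of such labels, untouched); with $n=0$ for a compatible link, nothing changes. You instead go through the packaged statement of Proposition \ref{prop5}: the multiplicative corrections $t_\bullet^{\pm n_i}$ are trivial since all $n_i=0$, and the remaining operation --- substituting $A_k \mapsto A_k - I_k$ --- cannot touch the self-crossing terms because their weights contain no uppercase labels. That last observation is the one extra ingredient your route needs; it is true (the paper states it when defining the link version of the AIP: at self-crossings the arbitrary labels cancel in the weight), and you justify it cleanly. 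The paper's route is shorter because the lemma already isolates the self-crossing behavior; yours is arguably more systematic, since it derives the corollary from the top-level change-of-coloring formula rather than from the intermediate lemma. Either argument is acceptable.
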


\begin{proof}
By Lemma \ref{selfcrossingweightlemma}, the weights at the self-crossings only change by $\pm n$, where $n$ is the weight of the component. Since the link is compatible every component has weight zero, so the weights of the self-crossings of the colored links $(L, C)$ and $(L, C')$ coincide.
\end{proof}

\subsection{Proofs}
\label{proofs}

\begin{proof}[Proof of Prop. \ref{prop1}] As usual, proving that the multi-variable AIP is a link invariant involves checking Reidemeister moves. We start by checking Reidemeister move one, pictured in Fig. \ref{r1affinebilabeling}.
A crossing involved in Reidemeister move one must be a self-crossing, so it is the first label that changes.
In both cases pictured in Fig. \ref{r1affinebilabeling}, the top and bottom labels on either side of the crossing are the same, so the weight of the crossing is zero. This means the kink contributes $\pm(t_i^0-1)=0$ to our polynomial.

\begin{figure}
\centering
\includegraphics[scale=0.17]{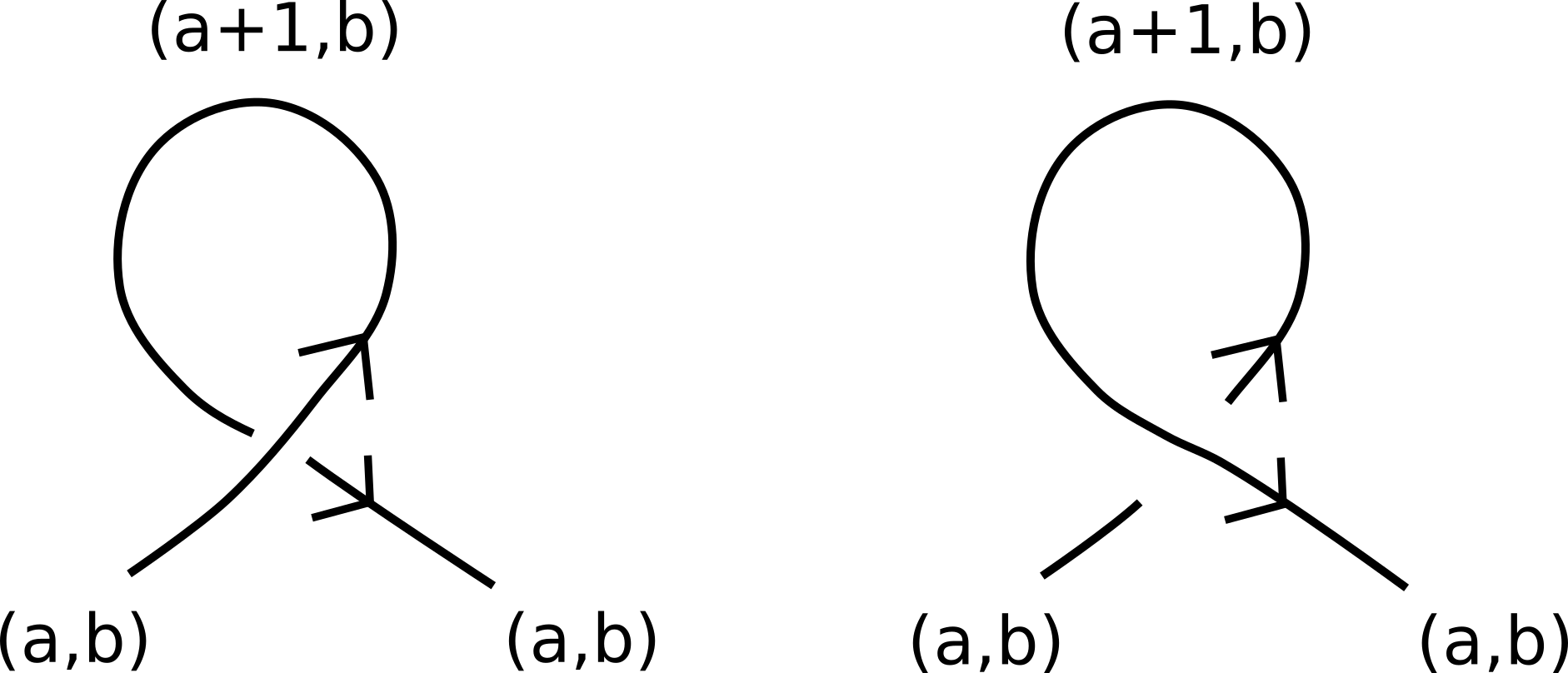}
\caption{The colored version of Reidemeister move one.}
\label{r1affinebilabeling}
\end{figure}

In the case of Reidemeister move two, we need to distinguish the case where both strands belong to the same component (and thus the crossings are self-crossings) from the case when the two strands belong to different components (so we have external crossings to consider).
If both strands belong to the same component, we get the situation pictured in Fig. \ref{r2affinebilabeling}.
The picture on the left has crossing weights of $W_+=a+b-(c+1)-d$ (bottom) and $W_-=(a-1)+b-c-d$ (top).
As these two weights are equal, the total contribution will be $+(t_i^{W_+}-1)-(t_i^{W_-}-1)=0$.
\begin{figure}
\centering
\includegraphics[scale=0.17]{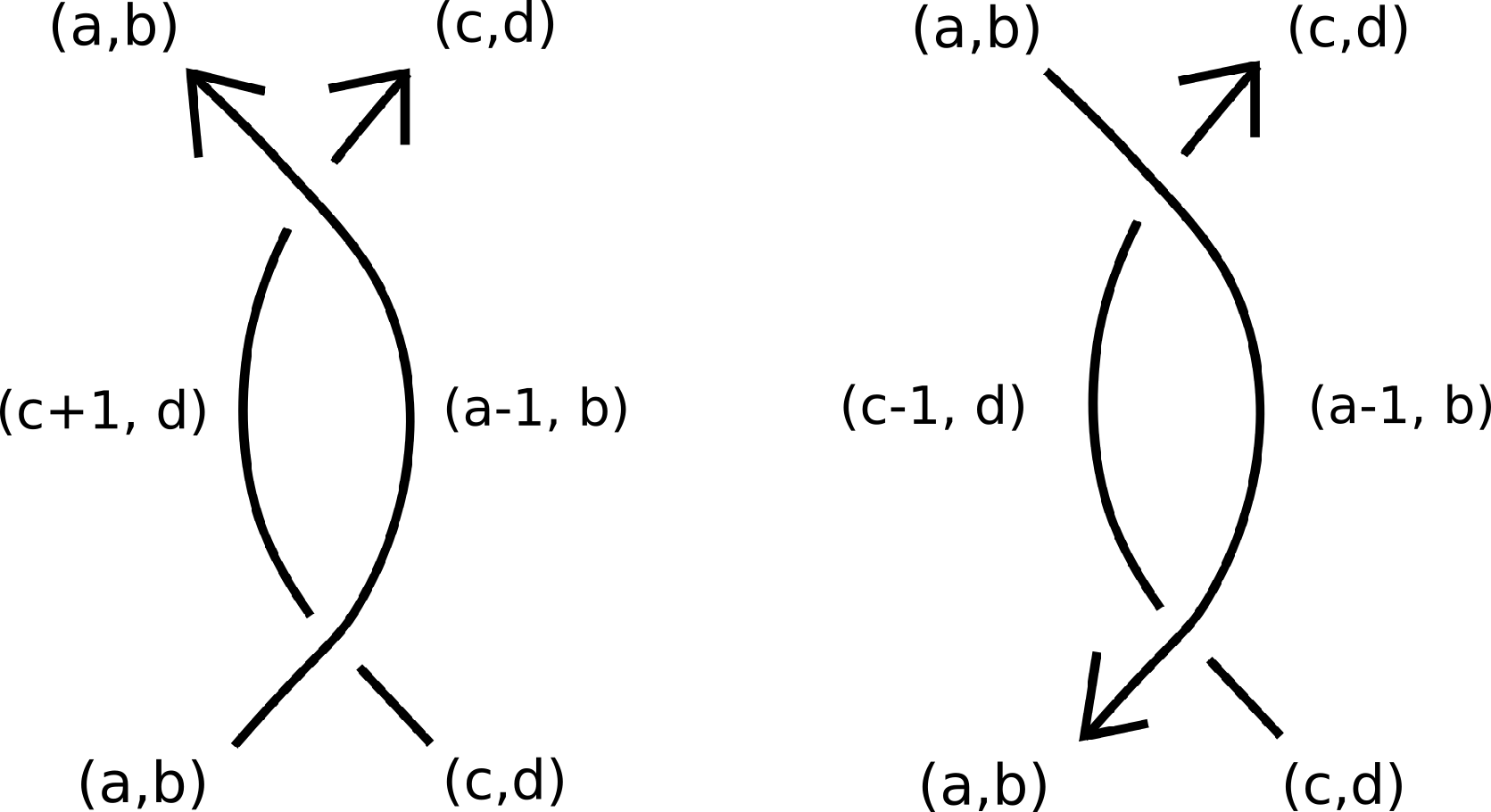}
\caption{The colored version of Reidemeister move two in the case both strands belong to the same component.}
\label{r2affinebilabeling}
\end{figure}
Similarly, the picture on the right has weights $W_+=a+b-c-d$ (top) and $W_-=(a-1)+b-(c-1)+d$ (bottom). As they are again equal, the total contribution of the right picture is also $+(t_i^{W_+}-1)-(t_i^{W_-}-1)=0$.
Since this is the same as the contribution when the two strands are separated (and there are no crossings), we have shown invariance under Reidemeister move two in the self-crossing case.

Let us now suppose that the two strands belong to different components, say the top strand is component $i$ and the bottom strand component $j$.
This situation is pictured in Fig. \ref{r2affinebilabelingmixed}.
Because of the way they're constructed, the weights are essentially unchanged from the self-crossing case; in the left picture, we have $W_+=a+b-c-(d+1)$ (bottom) and $W_-=a+(b-1)-c-d$ (top), while in the right picture we have $W_+=a+b-c-d$ (top) and $W_-=a+(b-1)-c-(d-1)$ (bottom).
\begin{figure}
\centering
\includegraphics[scale=0.17]{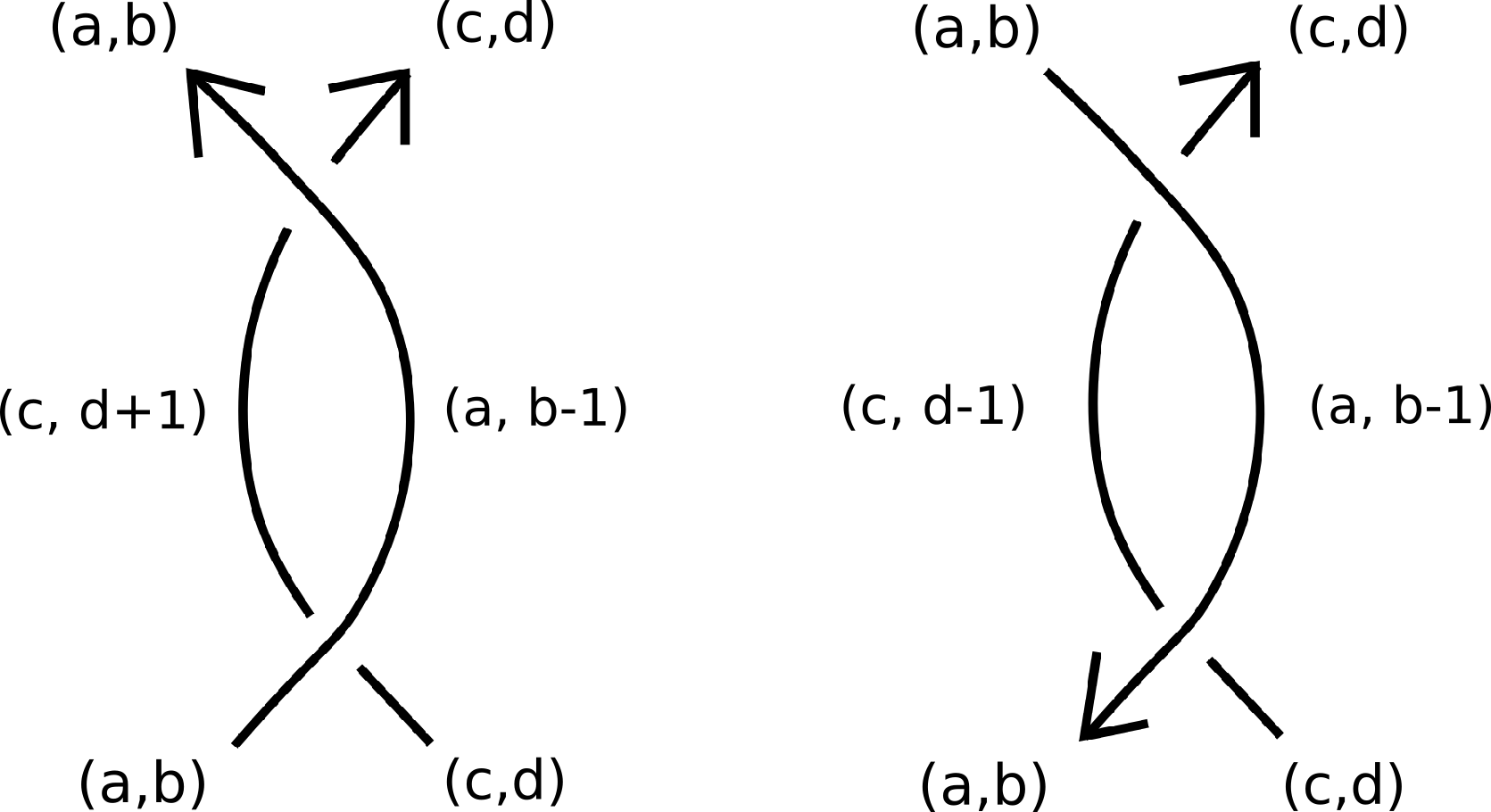}
\caption{The colored version of Reidemeister move two in the case the strands belong to different components.}
\label{r2affinebilabelingmixed}
\end{figure}
The weights are, once again, equal; moreover, the same strand goes over both crossings, so we will use the same variable (in this case, $t_i$) to express the contributing terms.
The overall contribution of either picture is then $+(t_i^{W_+}-1)-(t_i^{W_-}-1)=0$, which shows invariance under the second Reidemeister move when the strands belong to different components.

We need to consider two possibilities in the mixed Reidemeister move too; the component that forms two virtual crossings has no impact on the labels or the weights, but the other two strands can belong to the same component or different ones.
However, as Fig. \ref{rmixedaffinebilabeling} shows for the case of different components (the other case is analogous), the labels before and after the move coincide, so invariance under this move is trivially satisfied.
\begin{figure}
\centering
\includegraphics[scale=0.14]{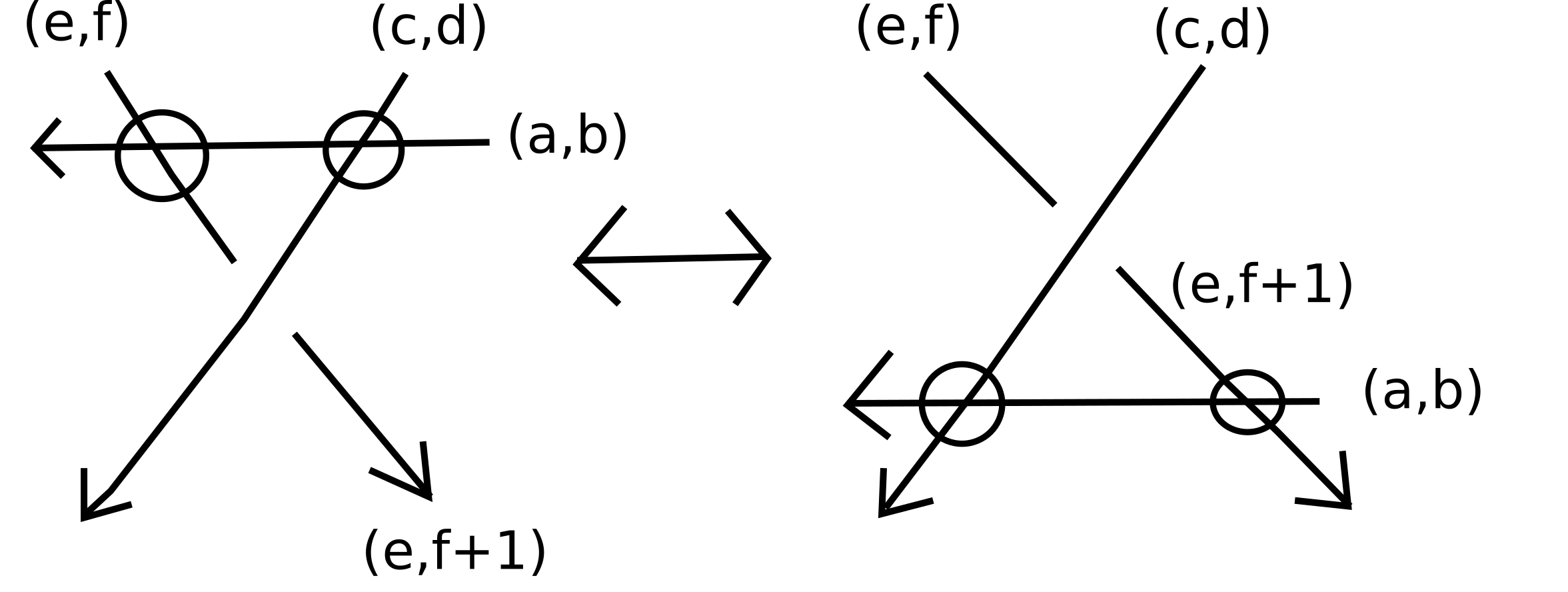}
\caption{The colored version of the mixed Reidemeister move.}
\label{rmixedaffinebilabeling}
\end{figure}

Finally, the case of Reidemeister move three.
Even given that our generating set of Reidemeister moves \cite{minimalgeneratingsetreidemeister} only use one Reidemeister move three, there are still many of cases to check, as each crossing could either be a self-crossing or an external crossing, depending on which component each of the three strands belongs to.
Since it's just a matter of checking every possibility (and the reader can rest assured we did our due diligence), we will only present one sample check and leave the rest as an exercise. 
\begin{figure}
\centering
\includegraphics[scale=0.14]{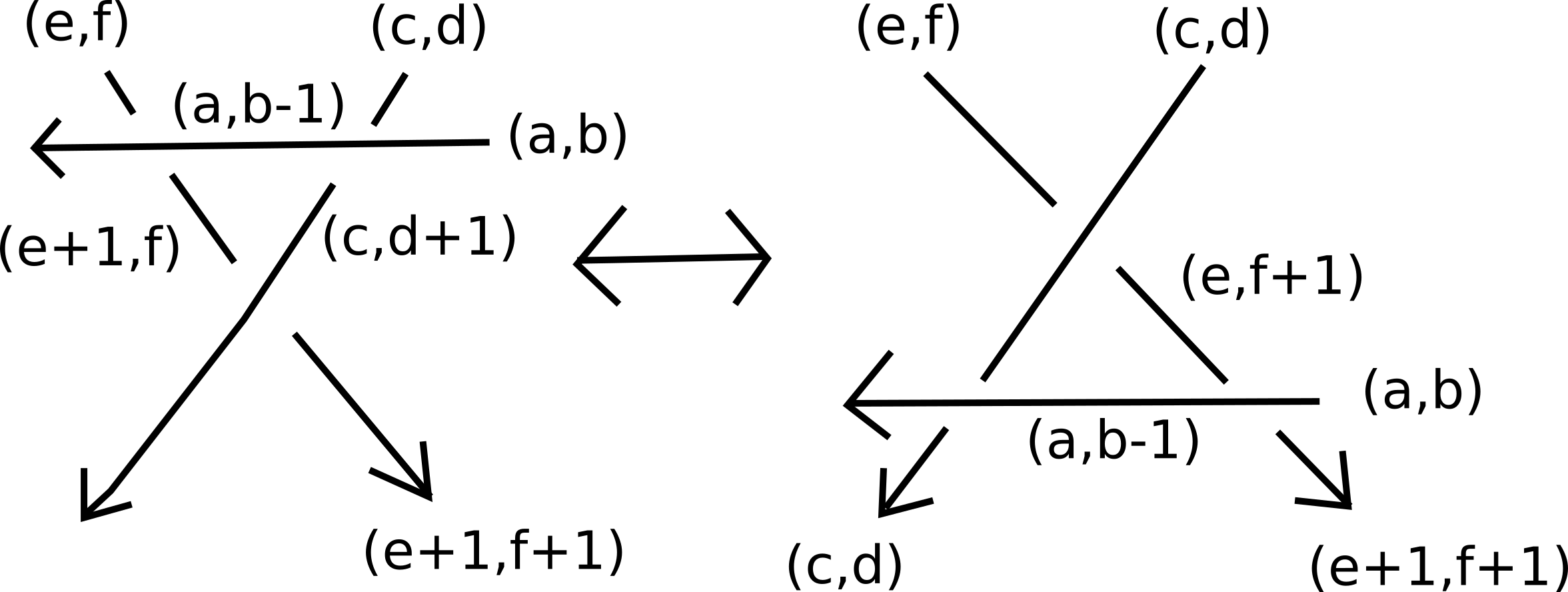}
\caption{One possible check for Reidemeister move three. Only the labels that are involved in computing weights are shown.}
\label{r3affinebilabeling}
\end{figure}
In our example, pictured in Fig. \ref{r3affinebilabeling}, the horizontal strand and the strand going from top left to bottom right belong to the same component, say component $i$, and the third strand belongs to a different component, say $j$.
As a result, in the left picture the top-left crossing is a self-crossing, while the other two crossings are external.
As usual with Reidemeister move three, there is a clear correspondence between the crossings on the left picture and those on the right picture, determined by which pair of strands is involved in a particular crossing.
Also note that this correspondence preserves the sign of the crossings, as well as which strand is the top strand of the crossing.
This means that we only need to check that the correspondence preserves the weights and we will have successfully proved invariance under Reidemeister move three.
The top-left crossing in the left picture has weight $a+(b-1)-(e+1)-f$, and corresponds to the bottom-right crossing in the right picture, whose weight is $a+b-(e+1)-(f+1)$; these weights are the same.
The top-right crossing in the left picture has weight $a+b-c-(d+1)$, and corresponds to the bottom-left crossing in the right picture, whose weight is $a+(b-1)-c-d$; these weights are the same.
Finally, the two middle crossings correspond to each other; the left one has weight $c+(d+1)-(e+1)-(f+1)$, and the right one $c+d-e-(f-1)$, so they are also equal.

Invariance under all Reidemeister moves has been checked, so the multi-variable AIP is in fact a virtual link invariant.
Because our bilabel separately tracks self-crossings and external crossings, if we collapse it so $(a_1, a_2)\mapsto a_1+a_2$ and we change the label at every crossing we encounter according to the usual rules, the resulting labels (and thus weights) are the same as the ones found in \cite{virtualknotcobordismaffineindex}. 
We can then easily reconstruct the generalization of the AIP found in said paper simply by setting all of our variables equal to each other, $t_i\mapsto t$ for all $i$.

In the case we have a virtual knot instead of a virtual link, all crossings are self-crossings, and in computing the weights the arbitrary labels will cancel out, so that the weight of each crossing is just the total change of index captured in the half-circuit of the knot from leaving the crossing to getting back to it the first time.
This is simply the definition of the original Affine Index Polynomial of \cite{affineindexpolynomial}, and concludes our proof of Prop. \ref{prop1}.

\end{proof}

\begin{proof}[Proof of Prop. \ref{prop2}]
Since we already proved that the quantity is an invariant, we just need to check that it vanishes on any virtual link with two double points, and that there is a virtual link with one double point on which the invariant is nonzero.
Let $L_{d,d'}$ be a link with two double points $d, d'$; after we resolve both crossing we get a sum of four alternating terms, as pictured in Fig. \ref{expansiondd2}. The two crossings in each term represent $d$ and $d'$, and every other crossing in the link is the same in all four pictures.
We claim that all the terms in the expression add up to zero. First of all, note that the labels must be the same in all four terms, as the labels are not influenced by whether a crossing is positive or negative.
This means that every crossing outside of $d, d'$ will have the same weight in all four terms, and contribute the same amount; but we are taking an alternating sum of said amounts, so these contributions will ultimately cancel out.
The situation for $d$ and $d'$ is similar: they too have the same labels in each of the pictures, and while their crossing weights could change when we swap between the positive and the negative version of the crossing, we can pair up terms. 
The terms with the same type of crossing (positive/negative) will have the same weight, and since they appear with opposite signs their contributions will also cancel out.

\begin{figure}
\centering
\includegraphics[scale=0.13]{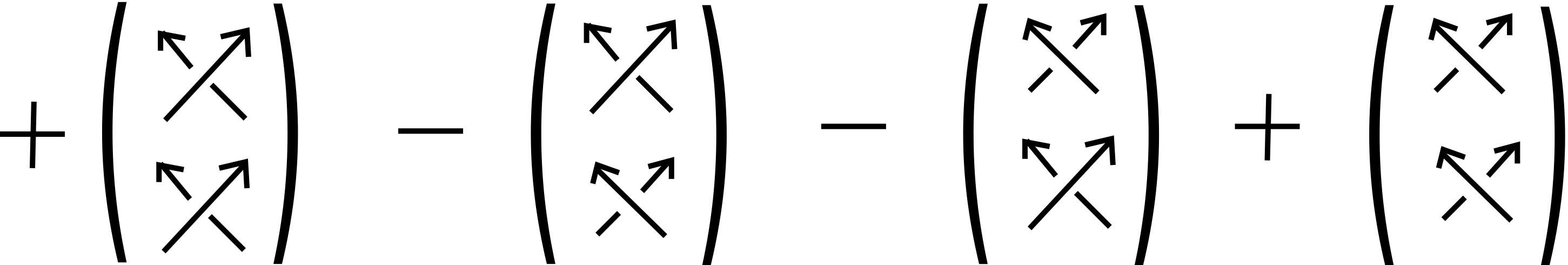}
\caption{A schematic representation of the expansion of the link $L_{d, d'}$. In each term, the top crossing represents $d$ and the bottom crossing $d'$.}
\label{expansiondd2}
\end{figure}

This shows that the multi-variable AIP is a Vassiliev invariant of order $\leq 2$.
Moreover, this proof rests on the fact that we can pair up crossings with the same weight and opposite sign; whether the crossing is a self-crossing or an external one has no bearing on it. 
We can then conclude that the same argument holds for the polynomial in \cite{virtualknotcobordismaffineindex} (whose crossing weights are ultimately equal to ours), and even for the polynomial in \cite{affineindexpolynomial}!

The simple link pictured in Fig. \ref{hopflinkdoublepoint} has a nonzero value, as different resolutions will lead to the unlink (whose invariant is zero) and the Hopf link (whose invariant under our chosen coloring is $t_1^{A-B-1}+t_2^{B+1-A}-2$).
It is easy to see that the same example will also work for the polynomial from \cite{virtualknotcobordismaffineindex} (we just set the two variables equal to each other, which still results in a nonzero expression).
Since Fig. \ref{hopflinkdoublepoint} is a link, that example does not work for the AIP for virtual knots; instead, we've provided the example in Fig. \ref{aipnonzero}, which already appears as an example in \cite{henrich}. Resolving the double point one way yields the unknot (whose polynomial is zero), while the other resolution has an AIP of $t^2+t^{-2}-2$.
This concludes the proof.

\begin{figure}
\centering
\includegraphics[scale=0.17]{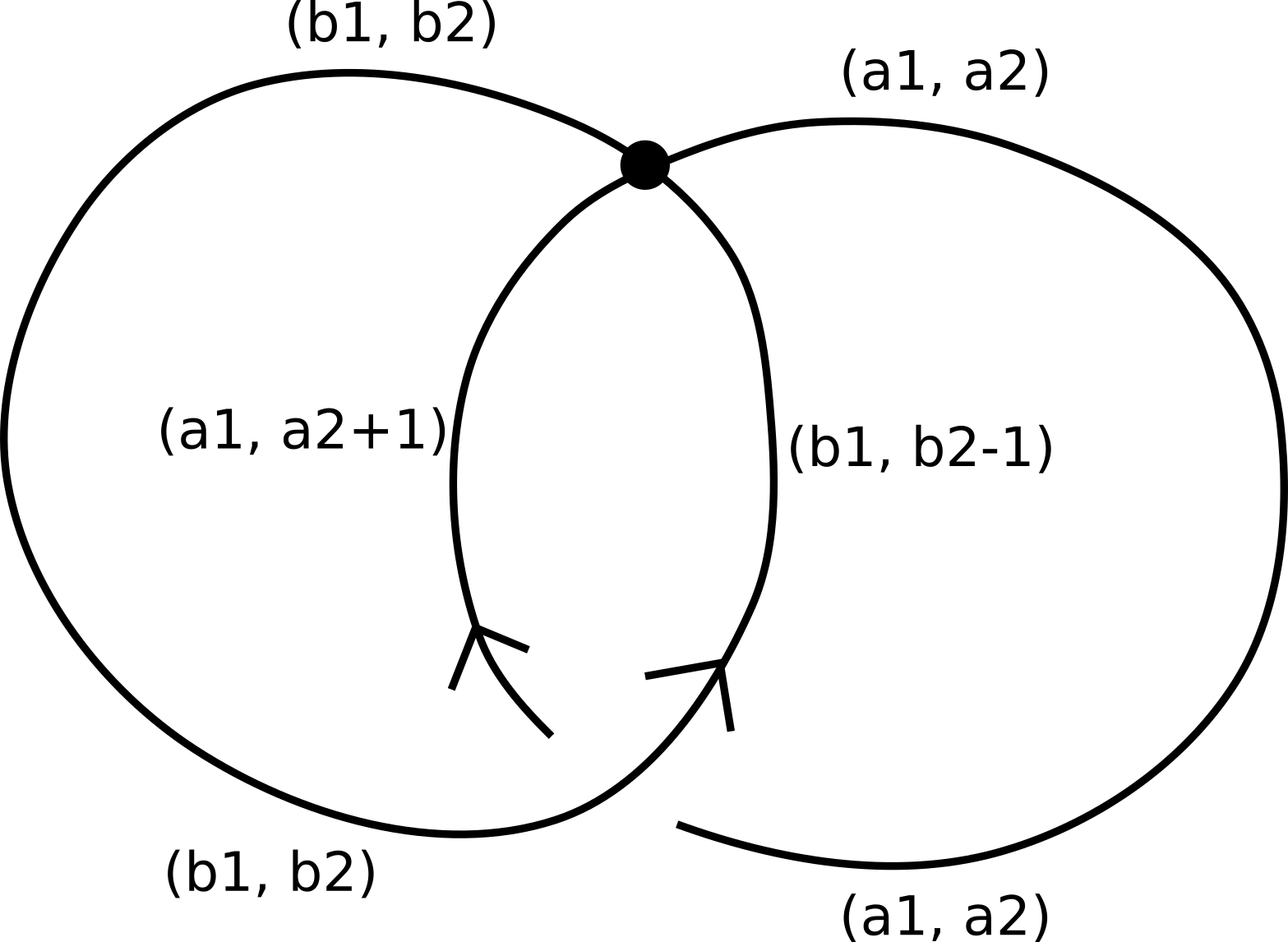}
\caption{A compatible virtual link with one double point whose multi-variable AIP is nonzero.}
\label{hopflinkdoublepoint}
\end{figure}

\begin{figure}
\centering
\includegraphics[scale=0.17]{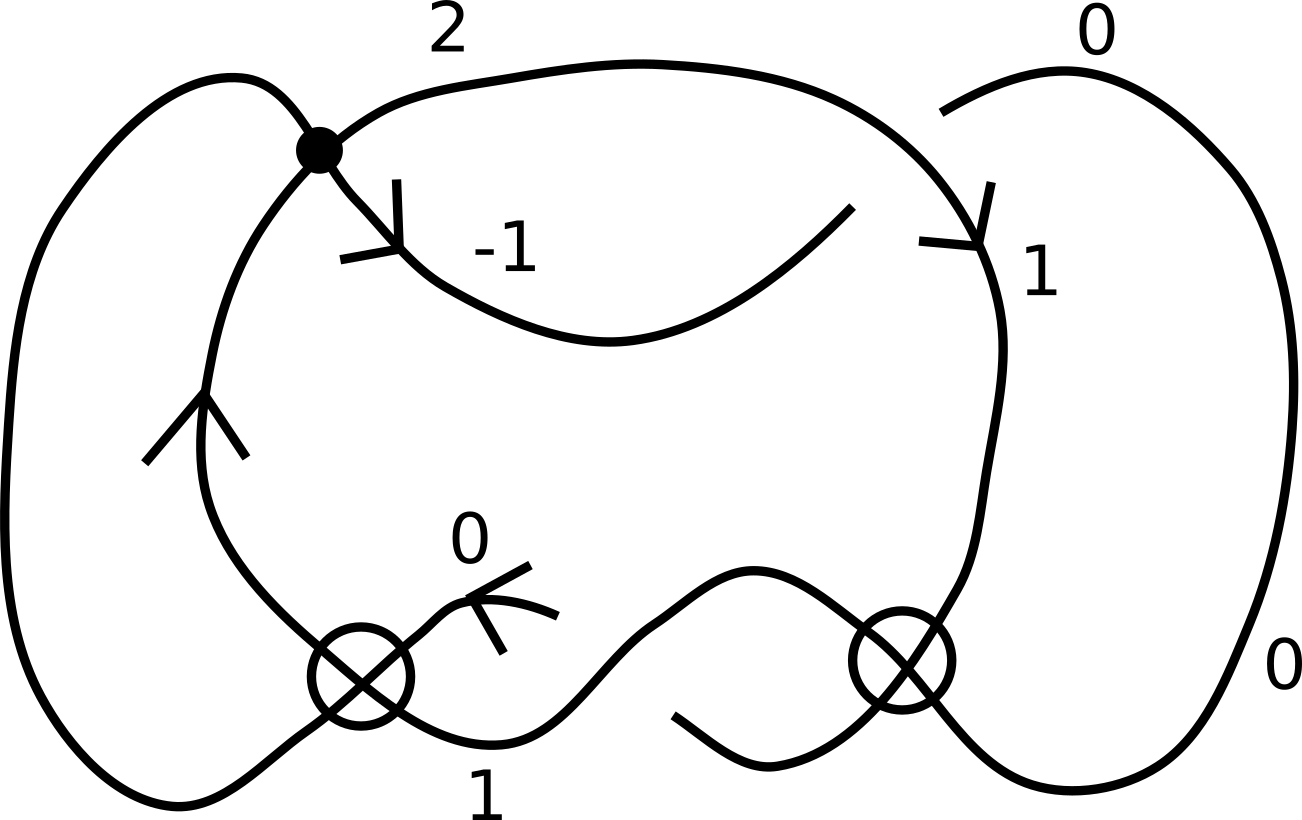}
\caption{A singular virtual knot on which the Affine Index Polynomial is nonzero.}
\label{aipnonzero}
\end{figure}

\end{proof}

\begin{proof}[Proof of Prop. \ref{prop3}]
Since we require the starting points of our coloring to be away from any Reidemeister moves, the proof that the quantity is an invariant is unchanged from Prop. \ref{prop1}.
Similarly, because the starting point of the coloring is fixed, we can apply the same exact argument of Prop. \ref{prop2} to show that the multi-variable AIP vanishes on non-compatible links with two double points.
Finally, compatible links are part of our definition (they simply have weight zero on each component), so the example of Fig. \ref{hopflinkdoublepoint} shows that the invariant is a Vassiliev invariant of order one.
\end{proof}

Proposition \ref{prop4} is a direct consequence of the two preceding lemmas, and Prop. \ref{prop5} follows from repeated applications of Prop. \ref{prop4}.

\section{Future work}

Work on this paper was started with the intention of developing a Vassiliev invariant for virtual tangles that generalized the AIP and was in some sense compatible with the stacking of virtual tangles (or, conversely, with the decomposition of virtual knots into a sum of virtual tangles).
As we started investigating how to extend the invariant from \cite{virtualknotcobordismaffineindex} to non-compatible links (the first step towards virtual tangles), we realized that the material was interesting enough to warrant its own paper.
We are thus currently in the process of extending these results to virtual tangles; preliminary results show that the multi-variable affine index polynomial naturally generalizes to the virtual tangle setting, and seems to satisfy the desired property regarding connected sum.
We are also studying some properties regarding orientation changes which we will present together with our virtual tangle results; the author hopes to have said paper available in a few months.

The author would like to acknowledge his institution, Oxford College of Emory University, for research support during the development of this paper.

\bibliographystyle{amsalpha}
\bibliography{fullbibliography}

\providecommand{\bysame}{\leavevmode\hbox to3em{\hrulefill}\thinspace}
\providecommand{\MR}{\relax\ifhmode\unskip\space\fi MR }
\providecommand{\MRhref}[2]{%
  \href{http://www.ams.org/mathscinet-getitem?mr=#1}{#2}
}
\providecommand{\href}[2]{#2}
\begin{thebibliography}{CDM12}

\bibitem[CD14]{chrismandyethreeloop}
M.~W. Chrisman and H.~Dye, \emph{The three loop isotopy and framed isotopy
  invariants of virtual knots}, Topology and its Applications \textbf{172}
  (2014), 107--134.

\bibitem[CDM12]{introtovki}
S.~Chmutov, S.~Duzhin, and J.~Mostovoy, \emph{Introduction to {Vassiliev} knot
  invariants}, Cambridge University Press, 2012.

\bibitem[FK13]{linkingnumberaffineindex}
L.~C. Folwaczny and L.~H. Kauffman, \emph{A linking number definition of the
  affine index polynomial}, Journal of Knot Theory and Its Ramifications
  \textbf{22} (2013), no.~12, 1341004, 1--27.

\bibitem[GPV00]{GPV}
M.~Goussarov, M.~Polyak, and O.~Viro, \emph{Finite-type invariants of classical
  and virtual knots.}, Topology \textbf{39} (2000), no.~5, 1045--1068.

\bibitem[Hen10]{henrich}
A.~Henrich, \emph{A sequence of degree one vassiliev invariants for virtual
  knots.}, Journal of Knot Theory and its ramifications \textbf{19} (2010),
  no.~4, 461--487.

\bibitem[Kau99]{virtualknottheory}
L.~H. Kauffman, \emph{Virtual knot theory}, European J. Comb \textbf{20}
  (1999), no.~7, 663--690.

\bibitem[Kau13]{affineindexpolynomial}
\bysame, \emph{An affine index polynomial invariant of virtual knots}, Journal
  of Knot Theory and Its Ramifications \textbf{22} (2013), no.~04, 1340007.

\bibitem[Kau18]{virtualknotcobordismaffineindex}
Louis~H. Kauffman, \emph{Virtual knot cobordism and the affine index
  polynomial}, Journal of Knot Theory and Its Ramifications \textbf{27} (2018),
  no.~11, 1843017.

\bibitem[Pet18]{indexpolyvirtualtangles}
N.~Petit, \emph{Index polynomials for virtual tangles}, Journal of Knot Theory
  and Its Ramifications \textbf{27} (2018), no.~12, 1850073, 1--15.

\bibitem[Pet19]{longframedfti}
\bysame, \emph{Finite-type invariants of order one of long and framed virtual
  knots}, Journal of Knot Theory and Its Ramifications (2019), 1950064, 1--42.

\bibitem[Pol10]{minimalgeneratingsetreidemeister}
M.~Polyak, \emph{Minimal generating set of {Reidemeister} moves}, Quantum
  Topology \textbf{01} (2010), no.~04, 399--411.

\end{thebibliography}

\end{document}